\newtheorem{theorem}{Theorem}[section]
\newtheorem{lemma}[theorem]{Lemma}
\newtheorem{proposition}[theorem]{Proposition}
\newtheorem{corollary}[theorem]{Corollary}
\theoremstyle{definition}
\newtheorem{definition}[theorem]{Definition}
\newtheorem{example}[theorem]{Example}
\newtheorem{notation}[theorem]{Notation}
\theoremstyle{remark}
\newtheorem{remark}[theorem]{Remark}
\newcommand{\sO}{{\cal O}}
\renewcommand{\P}{{\mathbb P}}
\DeclareMathOperator{\im}{im}
\DeclareMathOperator{\depth}{depth}
\DeclareMathOperator{\coker}{coker}
\DeclareMathOperator{\Ext}{Ext}
\DeclareMathOperator{\ann}{ann}
\DeclareMathOperator{\Proj}{Proj}
\DeclareMathOperator{\Hom}{Hom}
\DeclareMathOperator{\id}{id}
\def\ZZ{{\mathbb Z}}
\def\QQ{{\mathbb Q}}
\def\CC{{\mathbb C}}
\def\PP{{\mathbb P}}
\def\PP{{\mathbb P}}
\def\PP{{\mathbb P}}
\def\CO{{\mathcal O}}
\def\Mac2{{\emph{Macaulay2}}}
\def\canmod{X_{can}}
\DeclareMathOperator{\Tors}{Tors}
\def\Ddots{\mathinner{\mkern1mu\raise\p@
\vbox{\kern7\p@\hbox{.}}\mkern2mu
\raise4\p@\hbox{.}\mkern2mu\raise7\p@\hbox{.}\mkern1mu}}
\DeclareMathOperator{\Aut}{Aut}
\DeclareMathOperator{\PGL}{PGL}
\DeclareMathOperator{\St}{St}
\def\Ddots{\mathinner{\mkern1mu\raise\p@
\vbox{\kern7\p@\hbox{.}}\mkern2mu
\raise4\p@\hbox{.}\mkern2mu\raise7\p@\hbox{.}\mkern1mu}}
\newdimen\x \x=12pt
\date{}
\title{An 8-dimensional family of simply connected Godeaux surfaces}
\author{Frank-Olaf Schreyer and Isabel Stenger}
\begin{document}

\maketitle
\begin{abstract}

In this paper we describe a construction method for numerical Godeaux surfaces based on homological algebra.  We show the existence of an 8-dimensional locally complete family of simply connected numerical Godeaux surfaces. 
\end{abstract}
\tableofcontents
\section*{Introduction}
The minimal surfaces of general type with the smallest possible numerical invariants $p_g = q = 0$, $K^2 = 1$ are the numerical Godeaux surfaces. They have always been of a particular interest in the classification of algebraic surfaces. Miyaoka (\cite{Miyaoka}) showed that the torsion group of such surfaces is cyclic of order $m \leq 5$. Whereas numerical Godeaux surfaces for $m=3,4,5$ are completely described by Reid (\cite{ReidGodeaux78}), a complete classification in the other cases is still open.

Let $X$ be a numerical Godeaux surface, and let
$x_0,x_1$ (respectively $y_0,\ldots,y_3$) denote a basis of $H^0(X,\CO_X(2K_X))$ (respectively of $H^0(X,\CO_X(3K_X))$). We consider the weighted polynomial ring $S = \Bbbk[x_0,x_1,y_0,\ldots,y_3]$. Then the canonical ring $R(X)$ is a finitely generated $S$-module (see Lemma \ref{lem_canringfinggenmod}). 
Using the structure result from \cite{Stenger18}, $R(X)$ as an $S$-module admits a self-dual minimal free resolution $\textbf{F}$ of length three with a skew-symmetric middle map. 

In Section \ref{structure thm} we determine a set of algebra generators for the canonical ring $R(X)$ of $X$ and show that there is an embedding of the canonical model in a weighted projective space $$\canmod \hookrightarrow \P(2^2,3^4,4^4,5^3).$$ Now, considering $R(X)$ as an $S$-module corresponds to studying the canonical model $\canmod$ via its projection to $\Proj(S) = \P(2^2,3^4)$. In Lemma \ref{lem_normalization} we show that the projection map is birational to its image $Y \subset \P(2^2,3^4)$.

In this article, we study the case where
the bicanonical system $|2K_X|$ has no fixed part and four distinct base points, which we can assume to be mapped to the coordinate points of $\PP^3$ under the tricanonical map. Enumerating these base points, we introduce the notion of a \textit{marked numerical Godeaux surface}. Blowing up the four base points, the bicanonical system induces a fibration to $\PP^1$.

In Section \ref{normal form} we present our construction method. The main idea is to recover the minimal free resolution $\textbf{F}$ of $R(X)$ as a deformation of the complex $\textbf{F}/(x_0,x_1)$. In Proposition \ref{descriptionFmod} we give a complete description of $\textbf{F}/(x_0,x_1)$ for a  marked numerical Godeaux surface. 
The essential technique is then to consider an unfolding of $\textbf{F}/(x_0,x_1)$ and to interpret the flatness as equations with respect to the unfolding parameters.
After removing unfolding parameters which have to be zero by the equations, the parameters which are linear functions in $x_0,x_1$ satisfy a quadratic system of equations. This system consists of four quadrics defining a complete intersection variety $Q$ in a $\P^{11}$. The first step in our construction method 
for numerical Godeaux surfaces is to choose a line $\ell$ in $Q$. After this choice, the remaining relations define a linear system of equations (depending on $\ell$) which can be solved by a syzygy computation. 

In Section \ref{Fano} we analyze the Fano variety of lines $F_1(Q)$ and deduce a method to compute points in $F_1(Q)$. We show that $F_1(Q)$ is birational to a subvariety of four copies of  $\P^3$ but fail to give a complete birational classification. Moreover, we study the induced  action of the  group fixing the coordinate points in $\P^3$ on $F_1(Q)$. 

In Section \ref{subsec C-complex} we present the construction of an 8-dimensional locally complete family of simply connected numerical Godeaux surfaces.  We call this family the \textit{dominant component} as its corresponding lines dominate  $F_1(Q)$. We introduce special loci in $Q$ arising from our construction and call a line in $Q$ \textit{general} if it does not intersect any of these loci. We show in Theorem \ref{dominant component} that for a general line in $Q$ we get a 4-dimensional linear solution space in the second construction step. Thus, we obtain a $\P^3$-bundle over an open subset of $F_1(Q)$ which is 8-dimensional. Taking the group action into account, we get in total a family of dimension 8. 
Moreover, we identify the simply connected Barlow surfaces (see \cite{Barlow})  within our family.

In a further work, we show that the torsion $\ZZ/3\ZZ$- and $\ZZ/5\ZZ$-components of Reid (\cite{ReidGodeaux78}) correspond to lines meeting special loci in $Q$ and we give an alternative proof of the unirationality of these components using our method. 
Furthermore, we give a complete characterization for the existence of hyperelliptic bicanonical fibers and non-trivial torsion groups in terms of our homological setting. 

{\bf Acknowledgments.}  This work was funded by the Deutsche Forschungsgemeinschaft, SfB-TRR 195. We thank Wolfram Decker and Miles Reid for inspiring conversations and Thomas Breuer for his helpful comments on the group action. Our work makes essential use of $\Mac2$ (\cite{M2}). We thank Dan Grayson and Mike Stillman for their program. Our work would not have been possible without computer algebra.

\section{Preliminaries}
Throughout this paper, we use the following notation. 
\begin{itemize}
	\item $X$ denotes a numerical Godeaux surface;
	\item $\pi\colon X \rightarrow \canmod = \Proj(R(X))$ denotes the morphism to the canonical model;
	\item $K_X$ and $K_{\canmod}$ denote canonical divisors;
	\item $\Tors X$  denotes the torsion subgroup of the Picard group;
	\item $\Bbbk$ denotes the ground field.
\end{itemize}
We are mainly interested in the case $\Bbbk = \CC$ but for computations we also use $\Bbbk = \QQ$ or number fields. In our experiments, $\Bbbk$ can also be a finite field which we often may regard as a specialization of a number field.

In our construction we use some classical results on the bi- and the tricanonical system of a numerical Godeaux surface  $X$ over $\CC$ which we will briefly recall here. 
Let us start with the bicanonical system. We write 
$$ |2K_X| = |M| + F,$$ where $M$ denotes a generic member of the moving part and $F$ the fixed part of $|2K_X|$. 
\begin{proposition}[\cite{Miyaoka}, Lemma 6]\label{prop_bicanoncond}
	If $M$ is generically chosen, $M$ is reduced and irreducible. Moreover, $M$ and $F$ satisfy one of the following conditions
	\begin{enumerate}[(i)]
		\item $F = 0$,
		\item $K_XF = 0, F^2 = -2, M^2 = 2, MF = 2$,
		\item $K_XF = 0, F^2 = -4, M^2 = 0, MF = 4$.
	\end{enumerate}
\end{proposition}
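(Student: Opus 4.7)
My plan is to first show that $|2K_X|$ is a pencil, then extract numerical invariants from the splitting $2K_X = M + F$, and finally enumerate the admissible cases.

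\textbf{Step 1: Pencil structure.} Riemann--Roch gives $\chi(2K_X) = \chi(\CO_X) + \tfrac{1}{2}(2K_X)(2K_X - K_X) = 1 + K_X^2 = 2$, and Kawamata--Viehweg vanishing (applied to the big and nef divisor $K_X$) yields $h^i(2K_X) = 0$ for $i > 0$, so $h^0(2K_X) = 2$. Thus $|2K_X|$ is a pencil and $|M|$ is at most a pencil with no fixed component. A general $M$ is reduced by Bertini in characteristic zero. For irreducibility, I would rule out that $|M|$ is composite with a pencil: Stein factorization of the associated rational map $X \dashrightarrow \PP^1$ would produce a curve $C$ with $g(C) \leq q(X) = 0$, hence $C \cong \PP^1$, and a decomposition $M \sim k D$ with $k \geq 2$ where $|D|$ is the induced base pencil; since $D$ is a fiber class of $X \to C$, $h^0(M) = h^0(kD) = k+1 \geq 3$, contradicting $h^0(M) = 2$.

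\textbf{Step 2: Numerical reduction.} Setting $a := K_X \cdot F$ and $b := F^2$, the relation $M + F \sim 2K_X$ combined with $K_X^2 = 1$ gives
\[ K_X \cdot M = 2 - a, \qquad M \cdot F = 2a - b, \qquad M^2 = 4 - 4a + b. \]
Nefness of $K_X$ and $M$ yields $a \geq 0$, $M \cdot F \geq 0$, $M^2 \geq 0$; Hodge index gives $b \leq a^2$. Since $M$ is a non-zero nef moving divisor, $K_X \cdot M \geq 1$: otherwise Hodge index would force $M^2 = 0$ and $M \equiv 0$, contradicting that $M$ is a non-zero effective divisor. Hence $a \in \{0, 1\}$, and the integrality of $\chi(\CO_F)$ requires $a + b \equiv 0 \pmod 2$.

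\textbf{Step 3: Case analysis and main obstacle.} For $a = 0$, parity and $M^2 \geq 0$ leave $b \in \{0, -2, -4\}$. Since $K_X \cdot F = 0$, $F$ is supported on $(-2)$-curves, whose intersection form is negative definite; thus $b = 0$ forces $F = 0$ (case (i)), while $b = -2$ and $b = -4$ give cases (ii) and (iii). The case $a = 1$ (for which only $b = 1$ survives Hodge and parity) is the delicate one. Hodge index then holds with equality, forcing $F \equiv M \equiv K_X$, so $M \sim K_X - \tau$ for some $\tau \in \Tors X$; the sub-case $\tau = 0$ is excluded by $h^0(K_X) = p_g = 0$, and the sub-case $\tau \neq 0$ requires a more refined cohomological argument---since $h^0(K_X - \tau) = 2$ is needed for $|M|$ to be a pencil while $h^0(K_X + \tau) = 1$ is needed for $F$ to be fixed, one analyzes $h^1$ of the torsion twists via Riemann--Roch and Serre duality, and (if necessary) passes to the étale cyclic cover $\tilde X \to X$ of degree $m = \operatorname{ord}(\tau)$, where the pencil $|M|$ lifts to $|K_{\tilde X}|$ and forces $p_g(\tilde X) \geq 2$; compared with $\chi(\CO_{\tilde X}) = m$ and $K_{\tilde X}^2 = m$, this yields a contradiction with the invariants of a numerical Godeaux surface. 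The elimination of $a = 1$ is the main obstacle; everything else is routine bookkeeping with Riemann--Roch, Bertini, and Hodge index.
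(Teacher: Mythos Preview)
The paper does not give its own proof of this proposition; it is quoted directly from \cite{Miyaoka}, Lemma 6, so there is nothing to compare your argument against from the present paper. I will therefore comment on the soundness of your sketch.

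Steps 1 and 2 are fine, and the $a=0$ analysis in Step 3 is correct (including the observation that $K_X\cdot F=0$ forces $F$ to be supported on $(-2)$-curves, whose negative definite lattice then rules out $F^2=0$ unless $F=0$).

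The gap is in your treatment of $a=1$. You correctly reach $b=1$ and $M\equiv F\equiv K_X$, hence $M\sim K_X+\sigma$ for some torsion $\sigma$, and you correctly exclude $\sigma=0$ via $p_g=0$. But the remaining case $\sigma\neq 0$ is not closed off by what you wrote. Your proposed contradiction, namely that on the \'etale cover $\tilde X$ one has $p_g(\tilde X)\ge 2$ while $\chi(\CO_{\tilde X})=m$ and $K_{\tilde X}^2=m$, is not a contradiction at all: for $m\ge 3$ these numbers are perfectly compatible (e.g.\ $q(\tilde X)=0$ gives $p_g(\tilde X)=m-1\ge 2$). What one actually needs is the stronger input $h^1(X,K_X+\sigma)=0$ for every nontrivial torsion $\sigma$ (equivalently $h^0(K_X+\sigma)=1$), which immediately contradicts $h^0(M)=2$. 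The paper records exactly this fact in Remark \ref{rem_h0tors}, and its proof genuinely uses Bombieri's argument that $h^1(K_X+\sigma)>0$ forces $q(\tilde X)>0$, whence $\tilde X$---and therefore $X$---admits finite \'etale cyclic covers of arbitrarily large degree, which is incompatible with the known bound on $|\Tors X|$. Your sketch gestures toward this mechanism but stops short of it; the contradiction comes from the unbounded tower of covers, not from comparing $p_g(\tilde X)$ with $\chi$ and $K^2$.
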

\begin{remark}\label{rem_bicanonsys}
	The statement shows that the fixed part of $|2K_X|$, if non-empty, is supported on the $(-2)$-curves of $X$. Hence,
	$|2K_{\canmod}|$ is free from fixed components and its generic member is irreducible.
\end{remark}

Next we summarize some results on the tricanonical system:
\begin{theorem}[\cite{Miyaoka}, Theorem 3, Proposition 2 and Proposition 3]\label{prop_tricanbicanbase}
	The tricanonical map $\phi_{|3K_{X}|}$ is birational onto its image. 
	The linear system $|3K_X|$ has no fixed part. The generic element $M$ of the moving part of $|2K_X|$ contains no base points of $|3K_X|$.
\end{theorem}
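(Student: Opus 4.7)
The plan is to treat the three assertions in turn, using Riemann--Roch, Kodaira vanishing, the classification of $|2K_X|$ in Proposition~\ref{prop_bicanoncond}, and adjunction on a generic bicanonical curve $M$.

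First I would fix dimensions. Kodaira vanishing gives $H^i(nK_X)=0$ for $i>0$ and $n\geq 2$, and Riemann--Roch with $\chi(\CO_X)=1$ and $K_X^2=1$ yields $h^0(2K_X)=2$ and $h^0(3K_X)=4$, so $\phi_{|3K_X|}\colon X\dashrightarrow \PP^3$.

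Second, to rule out a fixed part of $|3K_X|$, I would argue by contradiction. Write $|3K_X|=|N|+\Phi$ with $\Phi$ effective, fixed, and $h^0(N)=4$. Since $K_X$ is nef and $(3K_X)^2=9$, $K_X\cdot\Phi\in\{0,1,2,3\}$. The cases $K_X\cdot\Phi>0$ would be excluded by applying Riemann--Roch to $N=3K_X-\Phi$ together with the Hodge index theorem, which jointly force $h^0(N)<4$. The residual case $K_X\cdot\Phi=0$ forces $\Phi$ to be a non-trivial combination of $(-2)$-curves; here I would use the classification in Proposition~\ref{prop_bicanoncond} together with an explicit construction of a section of $|3K_X|$ not containing $\Phi$ (built from sections of $|2K_X|$ and the configuration of $(-2)$-curves supporting $F$) to reach a contradiction.

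Third, for the absence of base points on a general $M$ and for birationality, I would restrict to a generic $M\in|2K_X|$ and use the sequence
\[
0\to \CO_X(3K_X-M)\to \CO_X(3K_X)\to \CO_M(3K_X|_M)\to 0
\]
with $3K_X-M=K_X+F$. A short cohomology computation on the tree (or disjoint union) of $(-2)$-curves supporting $F$ shows $h^0(K_X+F)=0$, so $|3K_X|$ restricts surjectively to $|3K_X|_M|$. By adjunction $K_M=(K_X+M)|_M$, which is close to $3K_X|_M$: in case (i) of Proposition~\ref{prop_bicanoncond} one has $3K_X|_M=K_M$, and in cases (ii)--(iii) the two divisors differ by an effective class supported on $M\cap F$. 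Thus $|3K_X|_M|$ is essentially the canonical system of $M$, hence base-point-free, giving the second claim. For general non-hyperelliptic $M$ the canonical system is birational, so $\phi_{|3K_X|}|_M$ is birational onto its image; combined with a separation argument for points lying on different members of the bicanonical pencil (obtained from the fact that $|3K_X|$ contains enough sections to separate these curves generically), this yields birationality of $\phi_{|3K_X|}$.

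The main obstacle, as is typical in this range of invariants, is the second step: ruling out $(-2)$-curve components in a hypothetical fixed part of $|3K_X|$ requires careful coordination between the cohomology of twists by such curves and the three cases of Proposition~\ref{prop_bicanoncond}, and this is where I expect the hardest bookkeeping to arise.
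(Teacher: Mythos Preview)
The paper does not give its own proof of this theorem: it is quoted verbatim from \cite{Miyaoka} (Theorem~3, Propositions~2 and~3) and used as a black box throughout. So there is no ``paper's own proof'' to compare against; your proposal is effectively an attempt to reconstruct Miyaoka's argument.

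As such a reconstruction, your outline is broadly along the right lines, but two steps are not yet under control. First, in the birationality argument you assume that a general $M$ is non-hyperelliptic, so that $|K_M|$ embeds $M$. This is precisely one of the things Miyaoka has to prove, and it does not follow from the numerics alone; you would need an independent argument that not every member of the bicanonical pencil is (honestly) hyperelliptic. Second, your claim $h^0(K_X+F)=0$ via ``a short cohomology computation on the tree (or disjoint union) of $(-2)$-curves supporting $F$'' presupposes that $F$ is reduced and of that shape. The constraints $K_X\cdot F=0$ and $F^2\in\{-2,-4\}$ from Proposition~\ref{prop_bicanoncond} do force $p_a(F)\le 0$, but you should spell out why $F$ cannot carry a section of $K_X+F$ (equivalently, why $h^0(\omega_F)=0$), including the possibility of non-reduced components. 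Once these two points are nailed down, the restriction-sequence strategy for the third assertion and the Riemann--Roch/Hodge index bookkeeping for the second are the standard route and match what Miyaoka does.
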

\begin{remark}\label{rem_emptybaseloc}
	Combining the previous statements, we conclude that no base point of $|3K_X|$ (respectively of $|3K_{\canmod}|$) is a base point of $|2K_X|$ (respectively of $|2K_{\canmod}|$). Hence, for a base point $P$ of $|3K_X|$ there exists a unique divisor $D \in |2K_X|$ which contains $P$. 
	Furthermore, Miyaoka showed that a point $\hat{P}$ is a base point of $|3K_{\canmod}|$ if and only if $\hat{P} = \hat{D_1}\hat{D_2}$, where $\hat{D_1},\hat{D_2}$ are two distinct effective curves which are numerically equivalent to $K_{\canmod}$ with $\hat{D_1} + \hat{D_2} \in |2K_{\canmod}|$. The last fact gives indeed a very precise description of the number of base points of $|3K_X|$.
\end{remark}
\begin{theorem}[\cite{Miyaoka}, Theorem 2]
	Every base point of the tricanonical system $|3K_X|$ is simple, and the number $b$ of base points is given as follows:
	\[ b =  \frac{ \# \{ t \in H^2(X,\ZZ)_\textup{tors} \mid t \neq -t \}}{2} 
	.\]
\end{theorem}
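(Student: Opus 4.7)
The plan is to establish a two-to-one correspondence between nontrivial torsion classes $\tau \in H^2(X,\ZZ)_{\text{tors}}$ with $\tau \neq -\tau$ and base points of $|3K_{\canmod}|$, realized by $\tau \mapsto D_\tau \cap D_{-\tau}$, where $D_\tau$ denotes the unique effective divisor in $|K_X+\tau|$.

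First I would prove that for every nontrivial torsion class $\tau$, the linear system $|K_X+\tau|$ consists of a single divisor $D_\tau$. Since $\tau$ is numerically trivial, Riemann--Roch gives
$$\chi(K_X+\tau) = \chi(\CO_X) + \tfrac{1}{2}(K_X+\tau)\cdot\tau = 1,$$
and Serre duality yields $h^2(K_X+\tau) = h^0(-\tau) = 0$ because $-\tau$ is a nontrivial torsion class, so $h^0(K_X+\tau) \geq 1$. The upper bound $h^0 \leq 1$ follows from $(K_X+\tau)\cdot K_X = 1$ and ampleness of $K_{\canmod}$: a positive-dimensional moving part $M$ would satisfy $0 < M\cdot K_X \leq 1$, forcing $M\cdot K_X = 1$, but then the integrality of $p_a(M) = \tfrac{1}{2}(M^2 + M\cdot K_X)+1$ together with the Hodge index bound $M^2 \leq 1$ excludes such a pencil.

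Second, for such a $\tau$ the sum $D_\tau + D_{-\tau}$ lies in $|2K_X|$ and
$$D_\tau \cdot D_{-\tau} = (K_X+\tau)(K_X-\tau) = K_X^2 - \tau^2 = 1,$$
so whenever $\tau \neq -\tau$ the divisors $D_\tau$ and $D_{-\tau}$ are distinct and meet transversally in a single point $P_\tau$. By Remark \ref{rem_emptybaseloc}, each $P_\tau$ is a base point of $|3K_{\canmod}|$, and conversely every base point arises this way from some pair of distinct numerical-$K$ divisors summing to an element of $|2K_{\canmod}|$. Since $P_\tau = P_{-\tau}$, the assignment is exactly two-to-one on $\{t \in H^2(X,\ZZ)_{\text{tors}} \mid t \neq -t\}$, yielding the stated count.

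The main obstacle is proving that each $P_\tau$ is a \emph{simple} base point, i.e.\ that the base scheme of $|3K_X|$ at $P_\tau$ is the reduced point. The transversal intersection $D_\tau \cdot D_{-\tau} = 1$ shows that the section of $|2K_X|$ corresponding to $D_\tau+D_{-\tau}$ vanishes at $P_\tau$ with multiplicity one, but lifting this fact to $|3K_X|$ is subtle because $h^0(K_X) = 0$ precludes multiplying by a section of $|K_X|$. I would handle this step via the multiplication maps $H^0(K_X+\sigma) \otimes H^0(2K_X-\sigma) \to H^0(3K_X)$ for suitably chosen torsion classes $\sigma$, combined with the birationality of the tricanonical map (Theorem \ref{prop_tricanbicanbase}) and a local jet-space computation at $P_\tau$, to exhibit a section of $3K_X$ whose vanishing order at $P_\tau$ is exactly one.
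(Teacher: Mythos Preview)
The paper does not prove this theorem; it is quoted from \cite{Miyaoka} without argument, so there is no proof here to compare against.

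On your proposal itself, there are genuine gaps. First, your argument for $h^0(K_X+\tau)\le 1$ does not close: with $M\cdot K_X=1$, parity of $M^2+M\cdot K_X$ forces $M^2$ odd and Hodge index gives $M^2\le 1$, but $M^2=1$ is \emph{not} excluded by what you wrote (it yields $p_a(M)=2$, a perfectly good integer). The correct route is the one in Remark~\ref{rem_h0tors}: $h^1(K_X+\tau)=0$ via the \'etale cover associated to $\tau$, and then Riemann--Roch gives $h^0=1$ directly.

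Second, and more seriously, you lean on Remark~\ref{rem_emptybaseloc} for the equivalence ``$\hat P$ is a base point of $|3K_{\canmod}|$ $\Leftrightarrow$ $\hat P=\hat D_1\cap\hat D_2$ with $\hat D_i$ distinct and numerically $\sim K_{\canmod}$.'' That characterization is itself one of Miyaoka's main lemmas in the very paper being cited, and it carries most of the content of the theorem. Using it as a black box makes your argument essentially circular; a self-contained proof must establish this equivalence.

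Finally, two further steps are missing or too vague: you do not verify that distinct unordered pairs $\{\tau,-\tau\}$ yield distinct points $P_\tau$ (injectivity of your map), and your plan for simplicity of the base points is only a gesture. Writing down a section in the image of $H^0(K_X+\sigma)\otimes H^0(2K_X-\sigma)\to H^0(3K_X)$ is easy, but you have not explained why any such section vanishes to order exactly one at $P_\tau$.
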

Note that for  a numerical Godeaux surface $X$,  $H^2(X,\ZZ)_\textup{tors} = \Tors X = H_1(X,\ZZ)$. 
Bombieri showed that the order of the torsion group of a numerical Godeaux surface is $\leq 6$ (see \cite{Bombieri}, Theorem 11.14).
Miyaoka refined this result in the following way: 
\begin{lemma}[\cite{Miyaoka}, Lemma 11]
	The torsion group of $X$ is cyclic of order $ \leq 5$.
\end{lemma}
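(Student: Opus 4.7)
The plan is to combine Bombieri's bound $|\Tors X| \leq 6$ with an analysis of the \'etale covers of $X$ associated to torsion line bundles, in order to exclude both torsion of order $6$ and the Klein four group.

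For each cyclic subgroup of order $n$ in $\Tors X$, let $\tilde\pi \colon \tilde X \to X$ denote the corresponding \'etale cyclic cover. Since $\tilde\pi$ is \'etale of degree $n$, one has $K_{\tilde X}^2 = n$ and $\chi(\sO_{\tilde X}) = n$. The Galois decomposition $h^i(\sO_{\tilde X}) = \sum_\chi h^i(X, L_\chi)$, combined with $q(X) = 0$ and the standard vanishing $h^1(X, L_\chi) = 0$ for non-trivial torsion $L_\chi$ on a numerical Godeaux surface, yields $q(\tilde X) = 0$ and hence $p_g(\tilde X) = n - 1$. Noether's inequality $K^2 \geq 2 p_g - 4$ applied to $\tilde X$ reproduces $n \leq 6$, with equality forced exactly when $n = 6$.

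To exclude $n = 6$ I would exploit that $\tilde X$ then lies on the Noether line with $p_g = 5$, $K^2 = 6$. Horikawa's classification leaves only two possibilities for the canonical map of $\tilde X$: either a double cover of a rational scroll, or a birational morphism onto a surface of degree $6$ in $\PP^4$. Both cases are rigid enough that a free $\ZZ/6\ZZ$-action whose quotient is a numerical Godeaux surface can be ruled out. The argument proceeds by decomposing $H^0(\tilde X, K_{\tilde X})$ and $H^0(\tilde X, 2 K_{\tilde X})$ into character eigenspaces under the deck group and comparing the resulting dimensions with the bicanonical data on $X$ given by Proposition \ref{prop_bicanoncond} and Theorem \ref{prop_tricanbicanbase}; the identity $b = \#\{t \in \Tors X \mid t \neq -t\}/2$ together with the simplicity of the tricanonical base points pins down the eigenspace structure tightly enough to force a contradiction. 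For the cyclicity claim, only $\Tors X \cong \ZZ/2\ZZ \oplus \ZZ/2\ZZ$ remains to be excluded: the three non-trivial involutions yield three intermediate double covers $Y_i \to X$, each with $K_{Y_i}^2 = 2$, $p_g(Y_i) = 1$, $q(Y_i) = 0$, and a character decomposition of $H^0(\tilde X, 2 K_{\tilde X})$ under $(\ZZ/2\ZZ)^{\oplus 2}$ imposes compatibility relations among the three non-trivial eigenspaces which conflict with the bicanonical analysis of $X$.

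I expect the exclusion of $n = 6$ to be the main obstacle. The Horikawa surface $\tilde X$ is extremely rigid, so the delicate point is matching the free deck-group action with its canonical pencil (type I) or its sextic image in $\PP^4$ (type II), and then translating the structural constraints into numerical incompatibilities on $X$. The base-point count $b$ and the bicanonical trichotomy of Proposition \ref{prop_bicanoncond} provide the concrete invariants through which the contradiction must be extracted.
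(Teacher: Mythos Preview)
The paper does not prove this lemma: it is simply quoted from \cite{Miyaoka} without argument, so there is no ``paper's own proof'' to compare your proposal against.

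As for your sketch on its own merits: the setup via \'etale covers and Noether's inequality is standard and correct, but the two decisive steps---excluding $\ZZ/6\ZZ$ and $(\ZZ/2\ZZ)^{2}$---are only planned, not executed. For $n=6$ you say Horikawa's two types are ``rigid enough that a free $\ZZ/6\ZZ$-action\ldots can be ruled out'' and that an eigenspace count ``pins down the structure tightly enough to force a contradiction'', but you never exhibit the contradiction; this is precisely the content of the lemma, and Horikawa surfaces do admit nontrivial automorphisms, so the claim needs an actual argument. For $(\ZZ/2\ZZ)^{2}$ the degree-$4$ cover has $K^{2}=4$, $p_{g}=3$, $q=0$, which lies \emph{strictly} above the Noether line, so there is no numerical obstruction at all; your ``compatibility relations\ldots which conflict with the bicanonical analysis'' are again asserted rather than shown. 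There is also a potential circularity: you invoke the base-point formula $b=\tfrac{1}{2}\#\{t\neq -t\}$, but in Miyaoka's paper that formula and the present lemma are developed together, so you would need to check that the formula is available independently.

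For comparison, Miyaoka's original argument is more direct and does not pass through Horikawa's classification: it works with the effective divisors $D_{\tau}\in |K_X+\tau|$ for the nontrivial torsion elements $\tau$ (each has $h^{0}=1$ by Remark~\ref{rem_h0tors}), their pairwise intersections $D_{\tau}\cdot D_{\sigma}=K_X^{2}=1$, and the way the combinations $D_{\tau}+D_{-\tau}$ and $2D_{\tau}$ sit inside the pencil $|2K_X|$ and the system $|3K_X|$. The contradiction for $\ZZ/6\ZZ$ and for $(\ZZ/2\ZZ)^{2}$ then comes from over-constraining these configurations, not from invariants of the cover.
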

Combining these two statements we obtain the following important result:
\begin{theorem}[\cite{Miyaoka}, Theorem 2' and subsequent Remark]{\label{cor_tricanbase}}
	As above, let $b$ denote the number of base points of $|3K_X|$. Then
	$$ b = \begin{cases} 
	0 &\mbox{if } \Tors X \cong 0 \text{ or } \ZZ/2\ZZ,  \\
	1 &\mbox{if } \Tors X \cong \ZZ/3\ZZ \text{ or } \ZZ/4\ZZ, \\
	2 &\mbox{if } \Tors X \cong \ZZ/5\ZZ. \end{cases}$$
\end{theorem}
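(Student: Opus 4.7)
The plan is to observe that this theorem is a direct combinatorial corollary of the two results just cited: Miyaoka's Theorem 2, which expresses $b$ as half the number of non-involutive torsion elements, and Miyaoka's Lemma 11, which restricts $\Tors X$ to be cyclic of order at most $5$. So the proof reduces to counting, for each $n \in \{1,2,3,4,5\}$, the set
\[ T_n := \{ t \in \ZZ/n\ZZ \mid t \neq -t \}. \]

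First I would enumerate the five possibilities case by case. For $n=1$ the group is trivial and $T_1 = \emptyset$, so $b=0$. For $n=2$ every element satisfies $2t = 0$, hence $t = -t$ for all $t$, so $T_2 = \emptyset$ and again $b=0$. For $n=3$ the non-zero classes $\bar 1, \bar 2$ are swapped by negation and neither is its own inverse, yielding $|T_3|=2$ and $b=1$. For $n=4$ the elements with $t = -t$ are precisely those annihilated by $2$, namely $\bar 0$ and $\bar 2$; so $T_4 = \{\bar 1, \bar 3\}$, giving $|T_4|=2$ and $b=1$. Finally, for $n=5$ the only element with $2t \equiv 0 \pmod 5$ is $\bar 0$, so all four nonzero classes lie in $T_5$, giving $|T_5|=4$ and $b=2$.

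Collecting these counts yields exactly the three cases in the statement: $b=0$ for $\Tors X \in \{0, \ZZ/2\ZZ\}$, $b=1$ for $\Tors X \in \{\ZZ/3\ZZ, \ZZ/4\ZZ\}$, and $b=2$ for $\Tors X \cong \ZZ/5\ZZ$. The identification $H^2(X,\ZZ)_{\text{tors}} = \Tors X$ noted in the paragraph preceding Bombieri's bound ensures that the group appearing in Miyaoka's formula is indeed the Picard torsion. Since each step is an elementary computation in a small cyclic group and the two ingredient theorems are quoted, there is no real obstacle here; the only thing to be careful about is making sure the general fact $\{t : t = -t\} = \ker(2\cdot \mathrm{id})$ is used correctly so that the parity considerations for $n=2$ and $n=4$ are handled properly.
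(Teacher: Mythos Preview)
Your proposal is correct and follows exactly the approach the paper indicates: the paper simply states that this theorem is obtained by ``combining these two statements'' (namely Miyaoka's formula $b = \tfrac{1}{2}\#\{t \in \Tors X : t \neq -t\}$ and the classification $\Tors X \cong \ZZ/n\ZZ$ with $n \le 5$), and you have spelled out the elementary case-by-case count that makes this explicit.
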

Later we will use this characterization and Lemma \ref{lem_oddtorsion} below to determine the torsion group of our constructed surfaces. 
\begin{remark}\label{rem_h0tors}
	Note that for a non-trivial torsion element $\tau \in \Tors X$ we have 
	\[h^0(X,K_X+\tau) = 1, \ h^1(X,K_X + \tau) = 0.\] 
	Indeed, by the Riemann-Roch theorem we have $h^0(X,K_X+\tau) - h^1(X,K_X+\tau) = 1$. Now $h^1(X,K_X+\tau) > 0$ implies that for the finite \'{e}tale covering $f\colon Y \rightarrow X$ corresponding to $\tau$ we get $h^1(Y,{\cal O}_Y) > 0$. Hence, $Y$ has finite cyclic coverings of any large order (see \cite{Bombieri}, Lemma 10.14), and so does $X$, which is not possible.
\end{remark}

\begin{lemma}\label{lem_oddtorsion}
	Assume that $|2K_X|$ has no fixed part and $(2K_X)^2 = 4$  distinct (simple) base points. Then the order of $\Tors X$ is odd.  In particular, for the number $b$ of base points of $|3K_X|$ we find that
	\begin{itemize}
		\item $b = 0$ if and only if $\Tors X \cong 0$,
		\item $b = 1$ if and only if $\Tors X \cong \ZZ/3\ZZ$,
		\item $b = 2$ if and only if $\Tors X \cong \ZZ/5\ZZ$.
	\end{itemize}
	
\end{lemma}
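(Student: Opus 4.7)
The plan is to rule out $2$-torsion in $\Tors X$ under the given hypothesis; since $\Tors X$ is cyclic of order at most $5$ by Miyaoka's lemma, this leaves only orders $1$, $3$, and $5$, and the three bulleted equivalences then follow at once from Theorem \ref{cor_tricanbase}, whose three cases correspond precisely to $b = 0$, $1$, $2$.

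To exclude a non-zero $2$-torsion element $\tau$, I would first produce a privileged divisor in the twisted canonical class: by Remark \ref{rem_h0tors}, $h^0(X, K_X + \tau) = 1$, so there is a unique effective divisor $D_\tau \in |K_X + \tau|$. Since $2\tau \equiv 0$, doubling gives $2D_\tau \in |2K_X|$. Any divisor in $|2K_X|$ passes through the four simple base points $P_1, \ldots, P_4$; in particular so does $2D_\tau$, and hence the support of $D_\tau$ contains all four $P_i$.

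The key numerical input is then
\[
D_\tau \cdot M \;=\; (K_X + \tau)\cdot 2K_X \;=\; 2 K_X^2 \;=\; 2
\]
for any $M \in |2K_X|$. I would take $M$ to be a generic member, which by Proposition \ref{prop_bicanoncond} is reduced and irreducible (we are in case (i) of that proposition since $|2K_X|$ is assumed fixed-part free). Because $D_\tau \cdot K_X = 1 < 2 = M \cdot K_X$, the irreducible curve $M$ cannot appear as a component of $D_\tau$, so $D_\tau$ and $M$ share no component and meet properly. But then their intersection product is at least the number of common points, giving $D_\tau \cdot M \geq 4$, contradicting $D_\tau \cdot M = 2$. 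This forces $\Tors X$ to have odd order.

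The main obstacle is the properness check in the last step: one must know that $M$ does not occur as a component of $D_\tau$, and it is precisely the combination of the irreducibility of a generic bicanonical member (from Proposition \ref{prop_bicanoncond}) with the numerical mismatch between the classes $K_X + \tau$ and $2K_X$ that makes this free. Once $2$-torsion is excluded, the cyclicity and the bound $|\Tors X| \leq 5$ leave only orders $1$, $3$, $5$, and the equivalences with $b = 0, 1, 2$ are just a case-by-case reading of Theorem \ref{cor_tricanbase}.
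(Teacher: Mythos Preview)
Your proof is correct and follows essentially the same approach as the paper's: both produce an effective $D \in |K_X + \tau|$ from a putative $2$-torsion element $\tau$, observe that $2D \in |2K_X|$, and derive a contradiction with the existence of four distinct base points. The paper states this last step tersely (``$|2K_X|$ contains the double curve $2D$ and thus, cannot have 4 distinct base points''), while you unpack it explicitly via the intersection computation $D_\tau \cdot M = 2$ against a generic irreducible member $M$, which is exactly the content behind the paper's one-line conclusion.
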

\begin{proof} 
	Suppose to the contrary that $\Tors X \cong \ZZ/2\ZZ$ or $\Tors X \cong \ZZ/4\ZZ$. Let $\tau \in \Tors X$ be a non-trivial torsion element of order 2. By Remark \ref{rem_h0tors} there exists an effective divisor $D \in |K_X + \tau|$. 
	But then $|2K_X|$ contains the double curve $2D$ and thus, cannot have 4 distinct base points. The second part is an immediate consequence of Theorem
	\ref{cor_tricanbase}.
\end{proof}

\section{A structure theorem for Godeaux rings}\label{structure thm}
In this section we present a structure theorem for the canonical ring 
\[ R(X) = \bigoplus_{n} H^0(X,\sO_X(nK_X)) = \bigoplus_{n} H^0(X,nK_X)  \]
of a numerical Godeaux surface $X$ in which we describe the minimal free resolution of $R(X)$ as a module over a weighted polynomial ring $S$. 

We first determine a minimal set of generators of $R(X)$ as an $\Bbbk$-algebra. 
Using the Riemann-Roch theorem we see that the plurigenera of $X$ are 
$$ P_n = h^0(X,nK_X) = 
\begin{cases}
1 & \text{for } n = 0, \\
0 & \text{for } n = 1, \\
\binom{n}{2} + 1 & \text{for } n \geq 2.
\end{cases} $$
Let $x_0,x_1$ be a basis of $H^0(X,2K_X)$, and let $y_0,y_1,y_2,y_3$ be a basis of $H^0(X,3K_X)$.
Now $R(X)$ being an integral domain implies that the elements $x_0^2,x_0x_1,x_1^2$ are linearly independent. Thus, 
as $H^0(X,4K_X)$ is $7$-dimensional, we can choose $z_0,\ldots,z_3 \in H^0(X,4K_X)$ extending these elements to a basis. To give a basis for the vector space $H^0(X,5K_X)$, we use the following: 
\begin{lemma}
	The multiplication map $\mu\colon H^0(X,2K_X) \otimes H^0(X,3K_X) \rightarrow H^0(X,5K_X)$ is injective.
\end{lemma}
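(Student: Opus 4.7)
The plan is to argue by contradiction: a nontrivial element of $\ker \mu$ will force the existence of a nonzero rational ``quotient'' that turns out to be a global section of $\sO_X(K_X)$, which is impossible since $p_g(X)=h^0(X,K_X)=P_1=0$.

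First I would take any $\alpha\in\ker\mu$ and write it (using the basis $x_0,x_1$ of $H^0(X,2K_X)$) as $\alpha=x_0\otimes f+x_1\otimes g$ with $f,g\in H^0(X,3K_X)$. The equation $\mu(\alpha)=0$ reads $x_0f+x_1g=0$ in $H^0(X,5K_X)\subset R(X)$. Since $R(X)$ is an integral domain (as $X$ is integral), the cases $f=0$ or $g=0$ immediately force $\alpha=0$, so we may assume both $f$ and $g$ are nonzero.

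Next I would interpret the identity $x_0f=-x_1g$ as defining a rational section $h$ of $\sO_X(K_X)$, given on the open set $\{x_1\ne0\}$ by $h=f/x_1$ and on $\{x_0\ne0\}$ by $h=-g/x_0$. The degrees match since $3K_X-2K_X=K_X$, and the two local expressions agree on the overlap precisely because of the relation $x_0f+x_1g=0$. Thus $h$ is a regular section of $\sO_X(K_X)$ on the complement of $V(x_0)\cap V(x_1)$, i.e.\ on the complement of the base locus of the pencil $|2K_X|$.

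The final step is extension across the base locus. By Proposition \ref{prop_bicanoncond} and the fact that $(2K_X)^2=4$, the base locus $V(x_0)\cap V(x_1)$ is zero-dimensional, hence of codimension $2$ in the smooth (in particular, normal) surface $X$. Since $\sO_X(K_X)$ is locally free, the section $h$ extends uniquely to a global section $h\in H^0(X,K_X)$. But $P_1=h^0(X,K_X)=0$, so $h=0$, and therefore $f=h\cdot x_1=0$, contradicting $f\ne0$. Hence $\mu$ is injective. The only place one has to be slightly careful is this extension step across the base locus, and I expect that to be the main (minor) obstacle; everything else is just bookkeeping with the domain property of $R(X)$.
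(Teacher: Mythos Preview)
Your argument and the paper's are at heart the same---both boil down to the vanishing $h^0(K_X)=0$, and your construction of the rational section $h=f/x_1=-g/x_0$ is exactly the hands-on version of the Koszul kernel the paper uses. The difference is where you carry it out, and that is where your proof has a genuine gap.

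You invoke Proposition~\ref{prop_bicanoncond} to conclude that the base locus $V(x_0)\cap V(x_1)$ is zero-dimensional on $X$, but that proposition says precisely the opposite: in cases~(ii) and~(iii) the pencil $|2K_X|$ has a nonzero fixed curve $F$, so $V(x_0)\cap V(x_1)\supset F$ has a one-dimensional component. Your Hartogs-type extension of $h$ across a codimension-$2$ locus then does not apply, and there is no obvious reason why $h$ should stay regular along $F$. (Concretely, writing $\operatorname{div}(x_i)=F+M_i$, the equality $\operatorname{div}(f)-\operatorname{div}(x_1)=\operatorname{div}(g)-\operatorname{div}(x_0)$ gives no control on the multiplicity of $F$ in $\operatorname{div}(f)$.)

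The paper fixes this by passing to the canonical model: since $R(X)\cong R(\canmod)$ it suffices to prove injectivity there, and by Remark~\ref{rem_bicanonsys} the fixed part of $|2K_{\canmod}|$ is empty (the $(-2)$-curves supporting $F$ are contracted). On $\canmod$ the Koszul complex on $x_0,x_1$ is then exact, and tensoring with $\sO_{\canmod}(5K_{\canmod})$ and taking global sections yields injectivity from $h^0(K_{\canmod})=0$. Your argument becomes correct if you make the same move: run it on the normal surface $\canmod$, where the base locus really is zero-dimensional and the extension step is valid.
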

\begin{proof} 
	As $R(X) \cong R(\canmod)$ it is sufficient to show that \[\widetilde{\mu}\colon H^0(\canmod,2K_{\canmod}) \otimes H^0(\canmod,3K_{\canmod}) \rightarrow H^0(\canmod,5K_{\canmod})\] is injective. 
	Let ${x_0},{x_1}$ be a basis of $H^0(X,2K_{X}) \cong H^0(\canmod,2K_{\canmod})$. 
	By Remark \ref{rem_bicanonsys} we know that the bicanonical system has no fixed part on the canonical model. Hence, the following sequence is exact
	\[ 0 \rightarrow \sO_{\canmod}(-4K_{\canmod}) 
	\xrightarrow{\begin{footnotesize}
		\begin{pmatrix}
		{x_1} \\ -{x_0}
		\end{pmatrix} \end{footnotesize}} 
	\begin{matrix}
	\sO_{\canmod}(-2K_{\canmod}) \\
	\oplus \\
	\sO_{\canmod}(-2K_{\canmod}) \\
	\end{matrix}
	\xrightarrow{({x_0},{x_1})} 
	\sO_{\canmod} \rightarrow \sO_{Z} \rightarrow 0,\]
	where $Z = \operatorname{div}({x_0}) \cap \operatorname{div}({x_1})$.
	Now tensoring with $\sO_{\canmod}(5K_{\canmod})$ and taking global sections, the statement follows since $h^0(\canmod,K_{\canmod}) = h^1(\canmod,\sO_{\canmod}) = 0$.  
\end{proof}
The lemma shows that the global sections $x_iy_j$ for $i = 0,1$ and $ j = 0,\ldots,3$ define an $8$-dimensional subspace of $H^0(X,5K_X)$. Now as $h^0(X,5K_X) = 11$, we can choose sections $w_0,w_1,w_2 \in H^0(X,5K_X)$ extending these elements to a basis. 
Since we will use the same notation for the generators in the following, we summarize the previous results in one table:
\begin{center}
	\begin{tabular}{l|c|l}
		$n$ & $h^0(X,nK_X)$ & basis of $H^0(X,nK_X)$ \\ \hline
		2 & 2 & \color{blue}$x_0,x_1$ \\
		3 & 4 & \color{blue} $y_0, \ldots, y_3$ \\
		4 & 7 & $x_0^2,x_0x_1,x_1^2,\color{blue}z_0,\ldots,z_3$ \\
		5 & 11 & $x_0y_0,\ldots,x_1y_3,\color{blue}w_0,w_1,w_2$
	\end{tabular}
\end{center}
The entries marked in blue give a minimal generating set of $R(X)$ (as a $\Bbbk$-algebra) up to degree 5. 
Ciliberto showed that the canonical ring of any surface of general type is generated in degree $\leq 6$ (see \cite{Ciliberto}, Theorem 3.5). Using this result, we get the following refinement for numerical Godeaux surfaces:

\begin{lemma}\label{prop_canonicalring5}
	As a $\Bbbk$-algebra, $R(X)$ is generated in degree $\leq 5$.  
\end{lemma}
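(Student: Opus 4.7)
The plan is to invoke Ciliberto's theorem and reduce to checking that no new generator appears in degree $6$, i.e.\ that
\[
H^0(X,2K_X)\cdot H^0(X,4K_X)+H^0(X,3K_X)\cdot H^0(X,3K_X)=H^0(X,6K_X).
\]
Once this is in hand, any subalgebra of $R(X)$ containing $x_i,y_j,z_k,w_\ell$ contains all of $R(X)$, because the first summand is already spanned by products of our chosen degree-$2$ and degree-$4$ generators, and the second by products of the $y_i$'s.

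To study the left-hand side I pass to the canonical model and exploit the Koszul complex of $x_0,x_1$, in the same spirit as the lemma preceding the statement. Since $|2K_{\canmod}|$ has no fixed components (Remark~\ref{rem_bicanonsys}), the pair $x_0,x_1$ is a regular sequence on the Cohen--Macaulay sheaf $\sO_{\canmod}$, yielding
\[
0\to\sO_{\canmod}(-4K)\to\sO_{\canmod}(-2K)^2\to\sO_{\canmod}\to\sO_Z\to 0,
\]
where $Z$ is the bicanonical base scheme of length $(2K_{\canmod})^2=4$. Twisting by $\sO_{\canmod}(6K)$, splitting at the intermediate sheaf $J=\im(\sO_{\canmod}(4K)^2\to\sO_{\canmod}(6K))$, and taking cohomology -- using Leray (du Val singularities are rational, so $H^i(\canmod,nK_{\canmod})=H^i(X,nK_X)$) together with Kawamata--Viehweg vanishing ($H^1(nK_X)=0$ for $n\geq 2$) -- I obtain the short exact sequence
\[
0\to H^0(J)\to H^0(6K_X)\xrightarrow{\mathrm{res}_Z}H^0(\sO_Z(6K))\to 0,
\]
in which $H^0(J)$ is precisely the image of the multiplication $H^0(2K_X)\otimes H^0(4K_X)\to H^0(6K_X)$.

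It therefore suffices to show that $H^0(3K_X)\otimes H^0(3K_X)\to H^0(\sO_Z(6K))$ is surjective, which I factor through $H^0(\sO_Z(3K))\otimes H^0(\sO_Z(3K))$. The second factor is trivially surjective: $\sO_Z(3K)$ is an invertible $\sO_Z$-module, so a local trivialization identifies multiplication with the $\Bbbk$-bilinear map $\sO_Z\otimes_\Bbbk\sO_Z\to\sO_Z$, surjective via $1\otimes f\mapsto f$. For the restriction $H^0(3K_X)\to H^0(\sO_Z(3K))$, a dimension count ($h^0(3K_X)=4=\mathrm{length}\,Z$) reduces the claim to the vanishing $H^0(I_Z(3K))=0$. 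This drops out of the Koszul sequence twisted by $\sO(3K)$, using $h^0(-K)=h^0(K)=p_g=0$, $H^1(-K)=H^1(2K)^\vee=0$ and $H^1(K)=H^1(\sO_X)^\vee=q^\vee=0$.

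The main conceptual step is recognizing that the Koszul complex of the bicanonical basis reduces everything to the assertion that restriction $H^0(3K_X)\to H^0(\sO_Z(3K))$ is an isomorphism -- which is exactly where the Godeaux invariants $p_g=q=0$ enter. A minor but not serious subtlety is that $Z$ may fail to be reduced in cases (ii)--(iii) of Proposition~\ref{prop_bicanoncond}; however, since $\sO_Z$ is a finite-dimensional unital $\Bbbk$-algebra, surjectivity of multiplication on $Z$ is unaffected.
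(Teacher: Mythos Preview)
Your proof is correct and follows a genuinely different route from the paper's. The paper instead chooses a section $y\in H^0(X,3K_X)$ with $\Proj(R(X)/(x_0,x_1,y))=\emptyset$ (possible since $|2K|$ and $|3K|$ have disjoint base loci), so that $R(X)$ becomes a finite free module over $A=\Bbbk[x_0,x_1,y]$. The Hilbert series computation
\[
(1-t^2)^2(1-t^3)\,\Psi_{R(X)}(t)=1+3t^3+4t^4+3t^5+t^8
\]
then shows that the free $A$-generators of $R(X)$ lie in degrees $0,3,4,5,8$; in particular there is no degree-$6$ generator, so $R(X)_6$ is spanned by $A_{\geq 1}\cdot R(X)_{\leq 5}$ together with $A_6\cdot 1$, both of which are already in the subalgebra generated in degree $\le 5$.

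Your argument stays with the pair $x_0,x_1$ and the bicanonical base scheme $Z$, verifying directly that $H^0(2K)\cdot H^0(4K)+H^0(3K)\cdot H^0(3K)=H^0(6K)$ via Koszul cohomology; the crucial step is the isomorphism $H^0(3K)\xrightarrow{\sim}H^0(\sO_Z(3K))$, which is exactly where $p_g=q=0$ enters. This is a clean, self-contained verification in the spirit of the preceding injectivity lemma. The paper's approach, on the other hand, yields the full list of free-generator degrees over $A$ as a byproduct, and this information is reused immediately afterwards in the proof of Proposition~\ref{prop_algebracanonical} to compute the Betti numbers of $R(X)$ over $S$.
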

\begin{proof}
	First recall that no base point of the tricanonical system of $X$ is a base point of the bicanonical system. Hence, there exists a global section $y \in H^0(X,3K_X)$ such that $\Proj(R(X)/(x_0,x_1,y) = \emptyset$.   
	Then, as $R(X)$ is Cohen-Macaulay, $R(X)$ is a free module over $A = \Bbbk[x_0,x_1,y]$. Using the Hilbert series $\Psi(t)$ of $R(X)$,  the degrees of the free generators are given by
	$$ (1-t^2)^2(1-t^3)\Psi(t)  = 1 + 3t^3 + 4t^4+3t^5+t^8.$$
	This implies that any homogeneous element of $R(X)$ of degree 6 is an $A$-linear combination of elements of $R(X)$ of degree $\leq 5$ which shows the claim. 
\end{proof}
So, if $\hat{S} := \Bbbk[x_0,x_1,y_0,\ldots,y_3,z_0,\ldots,z_3,w_0,w_1,w_2]$ is the weighted polynomial ring with degrees as assigned before, there is a closed embedding 
\begin{equation}\label{equ_closedemb}
\canmod =  \Proj(R(X)) \xhookrightarrow{\quad } \Proj(\hat{S}) = \P(2^2,3^4,4^4,5^3).
\end{equation}
Thus, we can consider the canonical model of $X$ as a subvariety of a weighted projective space of dimension 12.
Studying this embedding is difficult because we have no structure theorem for Gorenstein ideals of such high codimension. Furthermore, from a computational point of view, codimension 10 is not promising for irreducibility or non-singularity tests. The original construction idea in \cite{Schreyer05} addresses these problems:

\textbf{Basic idea}: We do not consider $R(X)$ as an algebra but as a finitely generated $S$-module, where $S \subset \hat{S}$ is a subring chosen appropriately. Geometrically, we study the image of $\canmod$ under the projection into the (smaller) projective space $\Proj(S)$.

So let $S =  \Bbbk[x_0,x_1,y_0,y_1,y_2,y_3]$ be the graded polynomial ring, where the $x_i$ and $y_j$ are as before with $\deg(x_i)= 2$ and $\deg(y_j)=3$.
The natural homomorphism 
$$f\colon S \rightarrow R(X)$$
gives $R(X)$ the structure of a graded $S$-algebra. In the following, we consider $R(X)$ as a graded $S$-module via the homomorphism $f$. 
\begin{lemma}\label{lem_canringfinggenmod}
	$R(X)$ is a finitely generated $S$-module.
\end{lemma}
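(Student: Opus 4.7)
The plan is to reduce finite generation to a finite-dimensionality statement via the graded Nakayama lemma, and then to establish that finite-dimensionality from the geometric fact that the bicanonical and tricanonical systems on $\canmod$ have no common base point.

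Write $\maxid = (x_0, x_1, y_0, y_1, y_2, y_3)$ for the graded maximal ideal of $S$. Since $R(X)$ is a graded $S$-module concentrated in non-negative degrees, graded Nakayama reduces the problem to showing that $R(X)/\maxid R(X)$ is a finite-dimensional $\Bbbk$-vector space: lifting a homogeneous basis of this quotient will then yield a finite $S$-module generating set of $R(X)$.

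To prove the quotient is finite-dimensional, I would pass to $\canmod = \Proj R(X)$ and argue that the common zero scheme $V(x_0, x_1, y_0, \ldots, y_3) \cap \canmod$ is empty. A point in this intersection would be simultaneously a base point of $|2K_{\canmod}|$ and of $|3K_{\canmod}|$. By Remark \ref{rem_bicanonsys} the bicanonical system on $\canmod$ is free from fixed components, and by Remark \ref{rem_emptybaseloc} no base point of $|3K_{\canmod}|$ is a base point of $|2K_{\canmod}|$, so the intersection is indeed empty. Hence the homogeneous ideal $\maxid R(X)$ contains some power of the irrelevant ideal of the finitely generated Noetherian $\Bbbk$-algebra $R(X)$ (Lemma \ref{prop_canonicalring5}), so $R(X)/\maxid R(X)$ is a graded Artinian $\Bbbk$-algebra, and in particular finite-dimensional over $\Bbbk$.

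The main delicate point is ensuring that the base point statements about $|2K_X|$ and $|3K_X|$ on the smooth model $X$ transfer cleanly to the canonical model $\canmod$; since the potential fixed part of $|2K_X|$ is supported on $(-2)$-curves (Remark \ref{rem_bicanonsys}), which are contracted by $\pi \colon X \to \canmod$, no obstruction arises. As a consistency check, the proof of Lemma \ref{prop_canonicalring5} in fact exhibits $R(X)$ as a free module of rank $12$ over the subring $\Bbbk[x_0, x_1, y] \subseteq S$ for a suitably chosen $y \in H^0(X, 3K_X)$, which gives finite generation over $S$ immediately; the Nakayama argument above is a more intrinsic repackaging of the same geometric input and avoids picking a specific $y$.
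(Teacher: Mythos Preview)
Your proof is correct and follows essentially the same approach as the paper: both rest on the observation that the common zero locus of $x_0,x_1,y_0,\ldots,y_3$ on $\canmod$ is empty because the bi- and tricanonical systems share no base point. The paper simply states ``empty vanishing locus, hence finite'' in one line, whereas you unpack that implication via graded Nakayama and the Artinianness of $R(X)/\maxid R(X)$; your consistency check via the freeness over $\Bbbk[x_0,x_1,y]$ is also valid and indeed already implicit in the paper's proof of Lemma~\ref{prop_canonicalring5}.
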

\begin{proof}
	By Proposition \ref{prop_tricanbicanbase} and Remark \ref{rem_emptybaseloc}
	the elements $x_i$ and $y_j$ have an empty vanishing locus in $\canmod$, hence $R(X)$ is finite over $S$.
\end{proof}
Using the closed embedding in \eqref{equ_closedemb}, we will from now on identify $\canmod$ with its image in $\P(2^2,3^4,4^4,5^3)$. Now, since $R(X)$ is finitely generated as an $S$-module, 
the homomorphism $f\colon S \rightarrow R(X)$ induces a finite morphism of projective schemes
$$\varphi\colon \canmod  \rightarrow \P(2^2,3^4)$$
with image $Y = \Proj(S_Y) \subset \P(2^2,3^4)$, where $S_Y = S/\ann_S(R(X))$.  
\begin{lemma}\label{lem_normalization}
	$(\canmod, \varphi)$ is the normalization of $Y$. 
\end{lemma}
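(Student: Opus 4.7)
The plan is to verify three properties that together characterize the normalization: $\varphi$ is finite, $\varphi$ is birational, and $\canmod$ is normal. By the universal property of the normalization (equivalently, Zariski's Main Theorem for a finite birational morphism onto an integral target), these identify $(\canmod, \varphi)$ with the normalization of $Y$.

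First I would check that $\varphi$ is everywhere defined and finite. The ring map $f\colon S \to R(X)$ factors as $S \twoheadrightarrow S_Y \hookrightarrow R(X)$, and Lemma \ref{lem_canringfinggenmod} says $R(X)$ is a finite $S$-module, hence a finite $S_Y$-module. It remains to see that the six sections $x_0, x_1, y_0, y_1, y_2, y_3$ have no common zero on $\canmod$; this follows from Theorem \ref{prop_tricanbicanbase} together with Remark \ref{rem_emptybaseloc}, which guarantee that no base point of $|3K_X|$ is a base point of $|2K_X|$ and vice versa. Thus $\varphi$ is a well-defined finite morphism.

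Next I would argue birationality using the tricanonical map. Theorem \ref{prop_tricanbicanbase} states that $\phi_{|3K_X|}\colon \canmod \to \P^3$, $P \mapsto [y_0(P): y_1(P): y_2(P): y_3(P)]$, is birational onto its image. Post-composing $\varphi$ with the rational projection $\P(2^2,3^4) \to \P^3$ defined by $[y_0:y_1:y_2:y_3]$ recovers $\phi_{|3K_X|}$ on the open locus where some $y_i$ is non-zero. Since that composition is birational, $\varphi$ must be birational as well. Equivalently, the function field $K(\canmod)$ is generated over $\Bbbk$ by the ratios $y_i/y_0$, all of which already lie in $K(Y)$, so $K(\canmod) = K(Y)$.

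Finally, $\canmod$ is normal because the canonical model of a surface of general type has at worst Du Val (canonical) singularities, and those are normal; $Y$ is integral because $S_Y$ embeds into the domain $R(X)$. Combining these with finiteness and birationality yields the claim. The only real obstacle is bookkeeping in the weighted projective setting — keeping track of the fact that $\varphi$ is the $\Proj$ of the graded inclusion $S_Y \hookrightarrow R(X)$ and that the projection to $\P^3$ is merely rational — since all of the hard geometric content (disjointness of the base loci and birationality of the tricanonical map) is already packaged in the cited Miyaoka statements.
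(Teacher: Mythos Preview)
Your proof is correct and follows essentially the same route as the paper: finiteness of $\varphi$, birationality via the factorization of the tricanonical map through $Y$, and normality of $\canmod$ combine to identify $(\canmod,\varphi)$ with the normalization. The paper's argument is terser (it simply cites that $\varphi$ is finite from the preceding discussion and that $\phi_3$ factors over $Y$), but the logical structure is identical.
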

\begin{proof}
	We already know that $\varphi\colon \canmod \rightarrow Y$ is a finite morphism. Furthermore, $\varphi$ is birational because the 
	tricanonical map $\phi_3 \colon \canmod \dashrightarrow  \P^3$ is birational onto its image and factors over $Y$.  
	Hence, as $\canmod$ is normal, $(\canmod,\varphi)$ is the normalization of $Y$.
\end{proof}
As a next step we will describe the minimal free resolution of $R(X)$ as an $S$-module. First we note that $R(X)$ is a Cohen-Macaulay graded $S$-module, hence  by the Auslander-Buchsbaum formula $R(X)$ has projective dimension 3.  The fact that $R(X)$ is a Gorenstein ring implies the following symmetry condition: 
\begin{proposition}\label{prop_algebracanonical}
	Let
	$$ 0 \leftarrow R(X) \leftarrow F_0 
	\leftarrow \ldots
	\leftarrow F_3 \leftarrow 0$$ be a minimal free resolution of $R(X)$ as an $S$-module, where $F_i = \bigoplus_{j \geq 0} S(-j)^{\beta_{i,j}}$. \\
	Then 
	$$
	\beta_{i,j}  = \beta_{3-i,17-j} \ \text{ for } \ 
	0\leq i \leq 3, \ 0 \leq j \leq 17,$$
	and the Betti numbers are of the following form: 
	\[
	{\rm \begin{matrix}
		&0&1&2&3\\
		\rm{total}:&8&26&26&8\\
		0:&1&.&
		.&.\\1:&.&.&.&\text
		.\\2:&.&.&.&.\\3
		:&.&.&.&.\\4:&4&
		.&.&.\\5:&3&6&.&.\\
		6:&.&12&.&.\\7:&.&8&8&.\\8:&.&.&12&.\\9:&.&.&6&3\\10:&.&.&.&4
		\\
		11:&.&.&.&.\\12
		:&.&.&.&.\\13:&.&.&.&.\\14:&.&.&
		.&1\\\end{matrix}}
	\]
\end{proposition}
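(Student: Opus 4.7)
The plan is to establish the three distinct assertions of the proposition in order: (i) that the projective dimension of $R(X)$ over $S$ is exactly $3$, (ii) the self-duality $\beta_{i,j}=\beta_{3-i,17-j}$, and (iii) the explicit shape of the Betti table.

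For (i), the canonical model $\canmod$ has at worst rational double point singularities, so $R(X)$ is Cohen--Macaulay (in fact Gorenstein) of Krull dimension $3$. Since $R(X)$ is a finitely generated $S$-module by Lemma \ref{lem_canringfinggenmod} with $\depth_S R(X) = \dim R(X) = 3$, the Auslander--Buchsbaum formula gives $\pd_S R(X) = \depth S - \depth_S R(X) = 6 - 3 = 3$.

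For (ii), I would identify the graded canonical module of $R(X)$. Since the grading on $R(X)$ is induced by the polarization $\sO_{\canmod}(K_{\canmod})$ and the canonical sheaf of $\canmod$ is $\sO_{\canmod}(K_{\canmod})$, we have
\[
\omega_{R(X)} \;=\; \bigoplus_{n} H^0\bigl(\canmod,(n+1)K_{\canmod}\bigr) \;\cong\; R(X)(1).
\]
Because the generator degrees of $S$ sum to $2+2+3+3+3+3=16$, we have $\omega_S = S(-16)$, and hence $\Ext^3_S(R(X),S) = \omega_{R(X)}(16) = R(X)(17)$. Cohen--Macaulayness makes $\Hom_S(-,S)$ applied to the minimal free resolution $\textbf{F}$ exact except at the last spot, so $\textbf{F}^\vee$ (reversed) is again a minimal free resolution, this time of $R(X)(17)$. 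Matching graded ranks term by term yields $\beta_{3-i,j}(R(X)) = \beta_{i,17-j}(R(X))$, the claimed symmetry.

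For (iii), the ranks $\beta_{0,j}$ are read off from the minimal $S$-module generating set of $R(X)$: the unit in degree $0$, the four elements $z_0,\ldots,z_3$ in degree $4$, and the three elements $w_0,w_1,w_2$ in degree $5$, giving $\beta_{0,0} = 1$, $\beta_{0,4}=4$, $\beta_{0,5}=3$. No higher-degree generators are needed because $R(X)$ is generated as a $\Bbbk$-algebra in degree $\leq 5$ (Lemma \ref{prop_canonicalring5}). The symmetry of (ii) then determines $\beta_{3,j}$. The middle Betti numbers $\beta_{1,\ast}$, $\beta_{2,\ast}$ are pinned down by comparing coefficients of the numerator
\[
P(t) \;=\; (1-t^2)^2 (1-t^3)^4\, \Psi(t), \qquad \Psi(t) = \sum_{n \geq 0} h^0(X,nK_X)\, t^n,
\]
against $\sum_{i,j} (-1)^i \beta_{i,j}\, t^j$, combined once more with the duality of (ii).

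The main obstacle is ruling out ghost cancellations between $F_1$ and $F_2$: a priori $P(t)$ records only Euler characteristics, so one could imagine phantom pairs of shifts appearing simultaneously in degrees $i=1,2$ and cancelling in $P(t)$. Eliminating such pairs cannot be done by Hilbert-series bookkeeping alone; it requires either a direct calculation of $\Tor^S_\bullet(R(X), \Bbbk)$ on an explicit example (with upper semi-continuity of Betti numbers propagating to the generic fibre) or the structural input that the middle differential of $\textbf{F}$ is skew-symmetric, which is the content of the structure theorem cited from \cite{Stenger18} and underpinning the entire construction.
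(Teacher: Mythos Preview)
Your arguments for (i) and (ii) are correct and agree with the paper. The gap you flag in (iii) is genuine, and neither of your suggested fixes resolves it: the skew-symmetry of $d_2$ only encodes $\beta_{1,j}=\beta_{2,17-j}$, which is already your duality statement (ii) and adds no new constraint; and an example-plus-semicontinuity argument would require first producing an explicit $R(X)$ and computing its resolution, which you have not done and which is essentially the content of the rest of the paper.

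The paper fills the gap by a different, self-contained idea that you are missing. One reduces modulo the $R(X)$-regular sequence $x_0,x_1,y$ from the proof of Lemma~\ref{prop_canonicalring5}: the graded Betti numbers of $R(X)$ over $S$ equal those of the Artinian module $M_a=R(X)/(x_0,x_1,y)$ over $T'=S/(x_0,x_1,y)$, a polynomial ring in the three remaining $y$-variables, all of degree $3$. Because every surviving variable has degree $3$, $M_a$ splits according to degree modulo $3$ as $M_a^{(0)}\oplus M_a^{(1)}\oplus M_a^{(2)}$ with Hilbert series $1+3t^3$, $4t^4$, $3t^5+t^8$. Each summand is simple enough that its minimal resolution is forced: $M_a^{(1)}$ is four shifted copies of the residue field (four Koszul complexes), $M_a^{(0)}$ is the cyclic module $T'/\mathfrak{m}^2$, and $M_a^{(2)}$ is its graded dual. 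Hence the three Hilbert numerators
\[
1-6t^6+8t^9-3t^{12},\qquad 4t^4-12t^7+12t^{10}-4t^{13},\qquad 3t^5-8t^8+6t^{11}-t^{17}
\]
are the Betti polynomials of the summands on the nose, with no possible cancellation, and summing them gives the table. This mod-$3$ decomposition after killing a regular sequence is the key step you should add.
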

\begin{proof}
	Let $\omega_S  \cong S(-16)$  denote the canonical module of $S$. 
	Applying $\Hom_S(\textup{-},\omega_S)$ to a minimal free resolution of $R(X)$ yields a minimal free resolution of $\Ext^3_S(R(X),  \omega_S) \cong \omega_{R(X)}$
	\begin{align*} \scalebox{0.93}{ $0 \leftarrow \omega_{R(X)}
		\leftarrow \Hom_S(F_3,\omega_S) 
		\leftarrow \Hom_S(F_2,\omega_S) 
		\leftarrow \Hom_S(F_1,\omega_S)
		\leftarrow \Hom_S(F_0,\omega_S)$}
	\leftarrow 0 .
	\end{align*}
	On the other hand, as $\omega_{R(X)} \cong R(X)(1)$, we obtain 
	\begin{align*} 0 \leftarrow R(X) 
	&\leftarrow \Hom_S(F_3,S(-17)) 
	\leftarrow \Hom_S(F_2,S(-17)) \\
	& \leftarrow \Hom_S(F_1,S(-17))
	\leftarrow \Hom_S(F_0,S(-17))
	\leftarrow 0,  
	\end{align*} 
	which is another minimal free resolution of $R(X)$ and shows the first claim. To determine the exact Betti numbers we consider the $R(X)$-regular sequence $x_0,x_1,y$ as in the proof of Proposition \ref{prop_canonicalring5}. We know that $M_a = R(X)/(x_0,x_1,y)$ as an $S/(x_0,x_1,y)$-module has the same Betti numbers as $R(X)$  as an $S$-module. Modulo $x_0,x_1,y$ the Artinian module $M_a$ decomposes into a direct sum of three  modules $M_a^{(i)}$ with Hilbert series 
	\begin{align*}
	h_0 =& 1+3t^3, \\
	h_1 =& 4t^4, \\
	h_2 =& 3t^5+t^8
	\end{align*}
	and Hilbert numerators
	\begin{align*}
	h_0\cdot(1-t^3)^3 =& 1-6t^6+8t^9-3t^{12}, \\
	h_1\cdot(1-t^3)^3 =& 4t^4 -12t^7+12t^{10}-4t^{13}, \\
	h_2\cdot(1-t^3)^3 =& 3t^5-8t^8+6t^{11}-t^{17}.
	\end{align*}
	Thus, $M_a$ and $R(X)$ have the Betti numbers as claimed. 
\end{proof}
Thus $R(X)$ has a self-dual resolution of length 3, but we cannot apply the famous structure theorem of Buchsbaum-Eisenbud (\cite{BuchsbaumEisenbud77}), since $R(X)$ is not a cyclic $S$-module. 
However, we can apply the structure result for Gorenstein algebras of \cite{Stenger19}:  
\begin{theorem}[\cite{Stenger19}, Theorem 1.5 and Corollary 5.6]\label{prop_structure}
	There exists a minimal free resolution of $R(X)$ as an $S$-module of type
	\[ 0 \leftarrow R(X) \leftarrow F_0 \xleftarrow{d_1} F_1 \xleftarrow{d_2} F_1^\ast(-17) \xleftarrow{d_1^t} F_0^\ast(-17) \leftarrow 0,\]
	where $d_2$ is alternating.
\end{theorem}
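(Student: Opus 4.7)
My plan is to apply Theorem 1.5 together with Corollary 5.6 of \cite{Stenger19} directly, so the work splits into verifying the hypotheses of that result and then tracing how it produces the alternating middle map. The inputs demanded by \cite{Stenger19} are that $R(X)$ be a graded Cohen--Macaulay $S$-algebra of grade three which is Gorenstein as an $S$-module, in the sense that $\omega_{R(X)}\cong R(X)(a)$ for some integer $a$. Cohen--Macaulayness of $R(X)$ is inherited from the normality of $\canmod$ combined with the finiteness over $S$ established in Lemma \ref{lem_canringfinggenmod}; projective dimension three is then the Auslander--Buchsbaum formula; and the twist $a=1$ is precisely the identification $\omega_{R(X)}\cong R(X)(1)$ used already in the proof of Proposition \ref{prop_algebracanonical}.

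Once these hypotheses are in hand, the shape of the resolution up to duality is formal. Applying $\Hom_S(-,\omega_S)$ with $\omega_S=S(-16)$ to the minimal free resolution of $R(X)$ produces another minimal free resolution of $R(X)$, and uniqueness of minimal resolutions gives isomorphisms $F_3\cong F_0^{\ast}(-17)$ and $F_2\cong F_1^{\ast}(-17)$. Choosing compatible bases, we may arrange the last differential to coincide with $d_1^{t}$; this is exactly the argument already implicit in Proposition \ref{prop_algebracanonical}. The content of the theorem is therefore the claim that, simultaneously with this choice, the middle differential $d_2\colon F_1^{\ast}(-17)\to F_1$ can be taken to be alternating.

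For a cyclic Gorenstein quotient this is the classical Buchsbaum--Eisenbud structure theorem, whose proof uses the multiplication on $R/I$ to produce a symmetric comparison map of resolutions that forces $d_2$ into alternating form. In our non-cyclic setting one replaces the ring multiplication by the $S$-algebra multiplication $F_0\otimes F_0\to R(X)$, lifts it to a self-morphism of the full complex, and shows that the lift is compatible with the self-duality in such a way that its restriction to $F_1$ is skew-symmetric; a change of basis then makes $d_2$ itself alternating, which is the content of Corollary 5.6 of \cite{Stenger19}. The step I expect to be the main obstacle if one were to redo the argument here is precisely this compatibility: producing the lift of the algebra multiplication to the resolution and verifying that, modulo the shift $17$, its middle component is skew-symmetric rather than merely symmetric up to homotopy. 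Granting this, the theorem follows immediately by invoking \cite{Stenger19}.
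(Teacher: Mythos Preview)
Your proposal is correct and matches the paper's treatment: the paper gives no proof of this theorem beyond the citation to \cite{Stenger19}, and your write-up does exactly what is appropriate here, namely verifying the hypotheses (Cohen--Macaulay of grade three, $\omega_{R(X)}\cong R(X)(1)$, finiteness over $S$) already established in the surrounding text and then invoking the cited structure result. Your additional paragraph sketching how \cite{Stenger19} obtains the alternating middle map is accurate exposition but goes beyond what the paper itself provides.
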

Using this statement we can translate the question of constructing or classifying numerical Godeaux surfaces from a problem in algebraic geometry into a problem in homological algebra. Thus, the main focus of our approach is to construct and describe $S$-modules $R$ having a minimal free resolution as described above.  

We end this section by determining a minimal set of generating relations of $\canmod \subset \P(2^2,3^4,4^4,5^3)$.  Using the proof of the structure theorem in \cite{Stenger19}, these relations can be computed explicitly from a minimal free resolution $\textbf{F}$ of $R(X)$ since the ring structure of $R(X)$ induces a multiplicative structure on the complex $\textbf{F} \otimes\textbf{F}$ and a morphism of  complexes $\textbf{F} \otimes \textbf{F} \rightarrow \textbf{F}$. Using this morphism, we compute a minimal generating set of $R(X)$ with the procedure \href{https://www.math.uni-sb.de/ag/schreyer/images/data/computeralgebra/M2/doc/Macaulay2/NumericalGodeaux/html/_canonical__Ring.html}{canonicalRing} from our package \href{https://www.math.uni-sb.de/ag/schreyer/images/data/computeralgebra/M2/doc/Macaulay2/NumericalGodeaux/html/index.html}{NumericalGodeaux}.

From \eqref{equ_closedemb} we know that there exists a surjective ring homomorphism
$$\hat{f} \colon \hat{S} \rightarrow R(X),$$
where $\hat{S} = \Bbbk[x_0,x_1,y_0,\ldots,y_3,z_0,\ldots,z_3,w_0,w_1,w_2]$ is the graded polynomial ring as defined before. 
Let $r_0 = 1,r_1= z_0,\ldots,r_4= z_3,r_5 = w_0,r_6=w_1,r_7 = w_2$ which generate $R(X)$ as an $S$-module. Proposition \ref{prop_algebracanonical}  shows that there are 26 $S$-linear relations between these module generators: 
\begin{equation}\label{eq_rel1}
0 = \sum_{k=0}^{7} g_{m,k}r_k.
\end{equation}
Furthermore, for the 28 elements $r_ir_j \in R(X)$, $1 \leq i \leq j \leq 7$, there exist elements $s_{i,j,k} \in S$ such that 
\begin{equation}\label{eq_rel2} 
r_ir_j = \sum_{k=0}^{7}s_{i,j,k}r_k.
\end{equation}
These relations are linearly independent and are uniquely determined modulo the relations in \eqref{eq_rel1}. 
Let $I_X \subset \hat{S}$ be the ideal generated by the relations in \eqref{eq_rel1} and \eqref{eq_rel2}. Then:
\begin{lemma}\label{prop_canonicalringideal}
	$R(X) \cong \hat{S}/I_X$
\end{lemma}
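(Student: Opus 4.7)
The relations in \eqref{eq_rel1} and \eqref{eq_rel2} are, by construction, satisfied in $R(X)$, so $I_X \subseteq \ker(\hat f)$ and we obtain an induced surjection $\bar f\colon \hat S/I_X \twoheadrightarrow R(X)$. The plan is to show that $\bar f$ is injective by comparing $\hat S/I_X$ and $R(X)$ as $S$-modules, where $S = \Bbbk[x_0,x_1,y_0,\ldots,y_3]$, and using the Betti data of Proposition \ref{prop_algebracanonical}.

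The first step is to prove that $\hat S/I_X$ is generated as an $S$-module by the images of $r_0=1,r_1,\ldots,r_7$. I would argue by induction on the total degree in the variables $z_0,\ldots,z_3,w_0,w_1,w_2$: any monomial of $\hat S$ can be written as $p\cdot q$ with $p\in S$ and $q$ a monomial in the $z_i,w_j$. If $q$ has degree $\geq 2$ in these variables, factor out some product $r_ir_j$ and apply \eqref{eq_rel2} to replace it by $\sum_k s_{i,j,k} r_k$; the resulting expression has strictly smaller degree in $z_i,w_j$, and the induction closes. Consequently the $S$-linear map
\[
\psi\colon S^8 \longrightarrow \hat S/I_X, \qquad e_k \longmapsto r_k,
\]
is surjective.

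Now compose with $\bar f$ to obtain $\phi = \bar f\circ\psi\colon S^8 \twoheadrightarrow R(X)$, which is precisely the standard $S$-module presentation of $R(X)$ coming from the generators $r_0,\ldots,r_7$. By Proposition \ref{prop_algebracanonical}, $\ker(\phi)$ is generated by exactly $26$ elements, namely those in \eqref{eq_rel1}. Since $\ker(\psi)\subseteq\ker(\phi)$ automatically, it remains to observe the reverse inclusion: every relation in \eqref{eq_rel1} lies in $I_X$ by construction, hence already vanishes in $\hat S/I_X$, giving $\ker(\phi)\subseteq\ker(\psi)$. The two surjections from $S^8$ therefore have the same kernel, forcing $\bar f$ to be an isomorphism.

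The main conceptual obstacle is step one: it is not a priori clear that \eqref{eq_rel1} and \eqref{eq_rel2} together generate $\ker(\hat f)$, because in principle other polynomial identities in the $z_i,w_j$ could arise. The inductive reduction above bypasses this by showing that the multiplication relations \eqref{eq_rel2} suffice to collapse $\hat S/I_X$ onto an $S$-module of rank at most $8$, after which the known $S$-module resolution of $R(X)$ rigidifies the remaining freedom. The argument also implicitly relies on the fact that the $26$ generators of \eqref{eq_rel1} are a minimal generating set of $\ker(\phi)$, which is exactly what Proposition \ref{prop_algebracanonical} provides through the Betti number $\beta_{1,\bullet} = 26$.
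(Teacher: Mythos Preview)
Your proof is correct and follows essentially the same approach as the paper: both reduce to showing that $\hat S/I_X$ is generated over $S$ by $r_0,\ldots,r_7$ (you spell out the induction using \eqref{eq_rel2}, the paper asserts it) and then use that the $26$ relations \eqref{eq_rel1} generate the $S$-syzygies of $R(X)$. The only cosmetic difference is that you compare kernels of the two maps from $S^8$, while the paper packages the same information as a surjection $R(X)\to \hat S/I_X$ in the reverse direction.
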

\begin{proof}
	Since all generators of $I_X$ define relations in $R(X)$,  $\hat{f}$ factors through a surjective homomorphism $\hat{S}/I_X \rightarrow R(X)$. On the other hand, as an $S$-module, $\hat{S}/I_X$ is also generated by $r_0,\ldots,r_7$, and every relation in \eqref{eq_rel1} is also an $S$-linear relation between the module generators of $\hat{S}/I_X$. Hence, there exists a surjective $S$-linear homomorphism $R(X) \rightarrow \hat{S}/I_X$ which shows the claim.
\end{proof}
We conclude that the homogeneous ideal of  $\canmod \subset \P(2^2,3^4,4^4,5^3)$ is minimally generated by 54 equations in the degrees $(6^6,7^{12},8^{18},9^{12},10^6)$.

\begin{remark}\label{rem_rringcond}
	In Theorem \ref{prop_structure} we have seen that the canonical ring of any numerical Godeaux surface, considered as an $S$-module, admits a minimal free resolution with an alternating middle map. Conversely, given an exact sequence 
	\[F_0 \xleftarrow{d_1} F_1 \xleftarrow{d_2} F_1^\ast(-17) \xleftarrow{d_1^t} F_0^\ast(-17) \leftarrow 0,\]
	where $F_0$ and $F_1$ are defined as above and $d_2$ is alternating, the question is whether the $S$-module $R$ admits a ring structure and, if so, whether $R$ defines the canonical ring of a numerical Godeaux surface. 
	In \cite{Stenger18}, Theorem 5.0.2, we give a sufficient condition for $R$ carrying a ring structure depending only on the first syzygy matrix $d_1$: let $d_1'$ be the matrix obtained from $d_1$ by erasing the first row, and let $I'$ denote the ideal generated by the maximal minors of $d_1'$. Then $R$ admits the structure of a Gorenstein ring if
	\par \smallskip
	\emph{(R.C.) \qquad $\depth(I',S) \geq 5$.}
	\par \smallskip
	If this condition is satisfied, then $ \Proj R$ defines a surface embedded in a weighted projective space. Moreover, under the condition that $\Proj R$ has only Du Val singularities, $\Proj R$ defines indeed the canonical model of a numerical Godeaux surface.
\end{remark}

\section{Normal form and a complete intersection of four quadrics in $\PP^{11}$}\label{normal form}
In this section we will introduce our unfolding techniques for the minimal free resolution of $R(X)$ as an $S$-module. 
We set up the first two syzygy matrices with unfolding parameters and study the system of relations which arises by setting the product of these matrices to zero. Using these relations, we introduce a normal form for the matrices in a minimal free resolution of $R(X)$ as an $S$-module. 

From the last section we know that there exists a minimal free resolution of $R(X)$ of the form 
\[
0 \leftarrow R(X) \leftarrow F_0 \xleftarrow{d_1} F_1 \xleftarrow{d_2} F_1^{\vee} \xleftarrow{d_1^t} F_0^{\vee} \leftarrow 0
\]
with $F_0 = S \oplus S(-4)^4 \oplus S(-5)^3$, $F_1 = S(-6)^6 \oplus S(-7)^{12} \oplus S(-8)^8$, $F_i^{\vee}  = \Hom(F_i,S(-17))$ and $d_2^t= -d_2$.
\begin{notation}[The general set-up]\label{not_gensetup}
	We write 
	\begingroup
	\renewcommand*{\arraystretch}{1.1}
	\begin{align*}
	d_1& =   \begin{array}{c|c|c|c}
	& 6S(-6) & 12S(-7) & 8S(-8) \\ \hline
	S &  \color{red} b_0(y)  \color{blue} + \ast&  \color{blue} \ast &  \color{blue} \ast \\ \hline
	4S(-4) & \color{blue} a  & \color{red} b_1(y) & \color{blue} c \\ \hline
	3S(-5) & \color{black} 0 & \color{blue} e & \color{red}b_2(y)
	\end{array} \\ \\
	d_2& = \begin{array}{c|c|c|c}
	& 6S(-11) & 12S(-10) & 8S(-9) \\ \hline
	6S(-6) & \color{blue} o & \color{blue} n & \color{red}b_3(y) \\ \hline
	12S(-7) & \color{blue} -n^t  & \color{red}{\color{red} b_4(y)} & \color{blue} p \\ \hline
	8S(-8) & \color{red}-b_3(y)^t & \color{blue} -p^t &
	\color{black} 0
	\end{array} 
	\end{align*}
	\endgroup
	The matrices $o$ and $b_4$ are both skew-symmetric. Since there are no elements of degree $1$ in $S$, the maps $S(-5)^3  \leftarrow S(-6)^{6}$ and $S(-8)^8 \leftarrow S(-9)^8$ are both zero.
	The red matrices are the submatrices of $d_1$ and $d_2$ which depend only on the variables $y_0,\ldots,y_3$. More precisely, for each $i$, all entries of $b_i(y)$ are linear combinations of $y_0,\ldots,y_3$ with coefficients in $\Bbbk$. 
	By $d_1'$ we denote the matrix obtained from $d_1$ by erasing the first row. We do not assign names to the matrices indicated by $\ast$ since they won't play a role in the following. For the matrices marked in blue we obtain the characterization:
	\begin{center}
		\renewcommand{\arraystretch}{1.4}
		\begin{tabular}{l|l|l} 
			& degree  & entries \\ \hline
			$\color{blue} a$, $ \color{blue} e$, $\color{blue} p$ & 2 & linear  combinations of $x_0,x_1$ \\ \hline
			$\color{blue} c$, $\color{blue} n$ & 4 & linear combinations of $x_0^2,x_0x_1,x_1^2$ \\ \hline
			$\color{blue} o$ & 5 & linear combinations of $x_iy_j,\ i = 0,1,\ j = 0,\ldots,3$ \\
		\end{tabular}
	\end{center}
\end{notation}
We start by describing the minimal free resolution modulo the regular sequence $x_0,x_1$. 
Let $\bar{R} := R(X)/(x_0,x_1)R(X)$ and $T := S/(x_0,x_1) \cong \Bbbk[y_0,\ldots,y_3]$ with $\deg(y_i) = 3$. 
As a $T$-module $\bar{R}$ splits into a direct sum 
\[\bar{R} =  \bigoplus_{k=0}^{2} \bar{R}^{(k)} \textup{ with } 
\bar{R}^{(k)}  := \bigoplus_{\substack{ j \equiv k \\ ( \mathrm{mod }\ 3)}} \bar{R}_j.\] 
Moreover, for any minimal free resolution $\textbf{F}$ of $R(X)$ the complex $\bar{\textbf{F}} = \textbf{F}/(x_0,x_1)$ decomposes into a direct sum of three $T$-complexes which are minimal free resolutions of $\bar{R}^{(k)}$ as $T$-modules.
\begin{lemma}
	$\Proj(\bar{R}^{(0)})$ is a finite scheme of length 4 in $\P^3$.
\end{lemma}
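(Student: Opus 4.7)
The plan is to reduce the statement to a Hilbert series computation that exploits the Cohen--Macaulay structure of $R(X)$. First I would observe that by Lemma \ref{lem_canringfinggenmod}, $R(X)$ is a finitely generated $S$-module, so reducing modulo $x_0, x_1$ shows that $\bar R$ is a finitely generated module over $T \cong \Bbbk[y_0,\ldots,y_3]$. Since every generator of $T$ has degree divisible by $3$, the $T$-action preserves each summand $\bar R^{(k)}$, and multiplication in $\bar R$ respects the mod-$3$ grading. In particular $\bar R^{(0)}$ is a finitely generated graded $T$-subalgebra of $\bar R$, which gives a closed embedding $\Proj \bar R^{(0)} \hookrightarrow \Proj T = \P^3$.

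Next I would invoke the regular sequence $(x_0, x_1, y)$ on $R(X)$ produced in the proof of Lemma \ref{prop_canonicalring5}, where $y \in T_3$ is chosen so that $\Proj(R(X)/(x_0, x_1, y)) = \emptyset$. The Artinian quotient $R(X)/(x_0, x_1, y)$ has Hilbert numerator $1 + 3t^3 + 4t^4 + 3t^5 + t^8$. Multiplication by $y$ preserves the mod-$3$ decomposition, hence $\bar R^{(0)}/y\bar R^{(0)}$ is the part of this Artinian module sitting in degrees $\equiv 0 \pmod 3$; extracting those terms leaves $1 + 3t^3$, so $\dim_{\Bbbk} \bar R^{(0)}/y\bar R^{(0)} = 4$.

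Since $y$ is a non-zero-divisor on $\bar R$, and hence on the direct summand $\bar R^{(0)}$, and the quotient $\bar R^{(0)}/y\bar R^{(0)}$ is Artinian, the graded $T$-module $\bar R^{(0)}$ has Krull dimension one with $y$ a linear homogeneous parameter. Therefore $\Proj \bar R^{(0)} \subset \P^3$ is zero-dimensional, and its length as a finite scheme equals the multiplicity of $\bar R^{(0)}$, which by the standard Hilbert series formula is $\dim_{\Bbbk} \bar R^{(0)}/y\bar R^{(0)} = 4$.

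There is no real obstacle: once one has the Cohen--Macaulay reduction and the explicit Hilbert numerator of $R(X)/(x_0, x_1, y)$, the result follows by separating the terms by degree modulo $3$ and reading off both the Krull dimension and the multiplicity from the resulting Hilbert series $\frac{1 + 3t^3}{1 - t^3}$.
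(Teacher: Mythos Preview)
Your argument is correct and follows the same Hilbert-series route as the paper, just packaged differently: the paper reads off the constant Hilbert polynomial $4$ from the minimal free $T$-resolution
\[
0 \leftarrow \bar R^{(0)} \leftarrow T \leftarrow T(-2)^6 \leftarrow T(-3)^8 \leftarrow T(-4)^3 \leftarrow 0
\]
(whose Betti numbers come from Proposition~\ref{prop_algebracanonical}), whereas you compute the multiplicity directly as $\dim_\Bbbk \bar R^{(0)}/y\bar R^{(0)}=4$ via the regular element $y$. Both extract the same information from the same Hilbert numerator $1+3t^3$; your version is slightly more elementary in that it does not invoke the full Betti table.

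One small point to tighten: saying that $\bar R^{(0)}$ is a finitely generated graded $T$-algebra does not by itself give a \emph{closed} embedding $\Proj \bar R^{(0)}\hookrightarrow \Proj T=\P^3$; for that you need the ring map $T\to\bar R^{(0)}$ to be surjective. This does hold here, since the $S$-module generators of $R(X)$ lie in degrees $0,4,4,4,4,5,5,5$, so the only $T$-module generator of $\bar R$ in degree $\equiv 0\pmod 3$ is $1$; hence $\bar R^{(0)}$ is a cyclic $T$-module, i.e.\ a quotient of $T$. The paper gets this implicitly from the shape of the resolution above (the single copy of $T$ in homological degree $0$).
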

\begin{proof} We know that 
	$ \bar{R}^{(0)} = \bigoplus_{k \geq 0} \bar{R}_{3k}$ is a graded ring. 
	Furthermore, the minimal free resolution of $\bar{R}^{(0)}$ as a $T$-module is of the form
	\[ 0 \leftarrow \bar{R}^{(0)} \leftarrow T \leftarrow T(-2)^{6} \leftarrow T(-3)^{8} \leftarrow T(-4)^3 \leftarrow 0,\] 
	where we consider the variables $y_j$ with degree 1 now. The Hilbert polynomial of $\bar{R}^{(0)}$ is the 
	constant polynomial $4$, thus $\Proj(\bar{R}^{(0)}) \subset \P^3$ is a finite scheme of length 4. 
\end{proof}

\begin{remark} 
	The Hilbert scheme of length 4 subschemes of $\PP^{3}$ which span $\PP^{3}$ is known to be irreducible with finitely many orbits under the $\PGL(4)$-action. In this paper we focus on the dense orbit which consists of  collections of $4$ distinct points in general position.
	The fact that there are only finitely many orbits give us hope that a complete classification of numerical Godeaux surfaces along the lines of our approach might be possible with further work.
\end{remark}

\begin{proposition}\label{prop_distinct_points} Let $X$ be a numerical Godeaux surface. The following are equivalent
	\begin{enumerate}
		\item[(a)] $|2K_X|$ has no fixed part and four distinct base points.
		\item[(b)] $\Proj(\bar{R}^{(0)})$ consists of four distinct points.
	\end{enumerate}
\end{proposition}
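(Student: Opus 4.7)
My plan is to identify $\Proj(\bar{R}^{(0)})$ with the scheme-theoretic image of the bicanonical base locus under the tricanonical map, and then to reduce the equivalence to a length comparison plus a local argument at the singularities of $\canmod$.

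First I would observe that $x_0, x_1$ form a regular sequence on the Cohen-Macaulay ring $R(X)$: by Remark \ref{rem_bicanonsys} the system $|2K_{\canmod}|$ has no fixed components, so the divisors of $x_0$ and $x_1$ share no common component on $\canmod$. Hence $\bar{R} = R(X)/(x_0, x_1)R(X)$ describes the base scheme $Z := V(x_0, x_1) \cap \canmod$ of $|2K_{\canmod}|$, which is $0$-dimensional of length $(2K_{\canmod})^2 = 4$. Since $H^0(X, K_X) = 0$ we have $\bar{R}_0 = \Bbbk$, and the preceding lemma shows $\bar{R}^{(0)}$ is cyclic as a $T$-module, so $\bar{R}^{(0)} = T/I$ for some ideal $I$. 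The structure map $T \to \bar{R}^{(0)}$ is simply evaluation of the $y_j$'s on $Z$ via the tricanonical morphism $\phi_3$ (defined on $Z$ by Remark \ref{rem_emptybaseloc}), whence $\Proj(\bar{R}^{(0)})$ is precisely the scheme-theoretic image $\phi_3(Z) \subset \P^3$.

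The crucial step is then a length count. The preceding lemma gives $\mathrm{length}(\phi_3(Z)) = 4 = \mathrm{length}(Z)$. Since a scheme-theoretic image has length at most that of its source, this equality forces $\phi_3|_Z$ to be an isomorphism $Z \xrightarrow{\sim} \phi_3(Z)$. Consequently, $\Proj(\bar{R}^{(0)})$ consists of four distinct points in $\P^3$ if and only if $Z$ consists of four distinct reduced points on $\canmod$.

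It remains to match the reducedness of $Z$ on $\canmod$ with condition (a). The direction from (a) to reducedness of $Z$ is easy: no base point of $|2K_X|$ can lie on a $(-2)$-curve (such a curve $C$ has $2K_X\cdot C = 0$ and would have to be a fixed component of $|2K_X|$, contradicting (a)), so $\pi$ identifies the four distinct base points on $X$ bijectively with four reduced points of $Z$. For the converse, the key local observation is that reducedness of $Z$ at a point $p$ forces $(x_0, x_1) = \mathfrak{m}_p \subset \mathcal{O}_{\canmod, p}$, which is impossible at a Du Val singularity of $\canmod$, where the maximal ideal requires three generators. Hence $Z$ lies in the smooth locus, $\pi^{-1}(Z)$ is a set of four distinct reduced base points of $|2K_X|$ with no fixed component, and (a) follows. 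I expect the Du Val step to be the main technical wrinkle; the length-matching that forces $\phi_3|_Z$ to be an isomorphism is the key structural insight.
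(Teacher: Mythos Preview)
Your proof is correct and follows a genuinely different route from the paper's. For (a)$\Rightarrow$(b), the paper picks a general smooth non-hyperelliptic curve $M\in|2K_X|$, observes that $\phi_3|_M$ is its canonical embedding, and concludes that the four base points map to four distinct points of $\P^3$; for (b)$\Rightarrow$(a), it appeals to Miyaoka's trichotomy (Proposition~\ref{prop_bicanoncond}), noting that any nonzero fixed part contracts to a rational double point on $\canmod$ and thereby forces $\Proj(\bar R^{(0)})$ to be non-reduced. You instead identify $Z$ with $\Proj(\bar R^{(0)})$ directly via a length count (in fact this identification holds unconditionally, since the degree-zero localizations $\bar R_{(y_i)}$ see only the summand $\bar R^{(0)}$), and then dispatch (b)$\Rightarrow$(a) with the clean local observation that two sections cannot cut out a reduced point at a Du Val singularity, where the maximal ideal needs three generators. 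Your argument is more self-contained and avoids the curve-theoretic input (smoothness and non-hyperellipticity of a general bicanonical member, which the paper asserts without justification); the paper's argument, on the other hand, is more geometric and ties the statement directly to the structure of the bicanonical fixed part.
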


\begin{proof} The scheme $\Proj(\bar{R}^{(0)})$  is the image of the base points of $|2K_{X}|$ under the tricanonical map from $\canmod$ to $\PP^3$. The fixed part $F$ in case $(ii)$ and $(iii)$ of Proposition \ref{prop_bicanoncond} gets contracted to rational double points leading to a non-reduced scheme structure of $\Proj(\bar{R}^{(0)})$. So these cases are excluded under the assumption $(b)$. To prove $(a) \Rightarrow (b)$, we note that a general member of $|M|=|2K_{X}|$ is a smooth non-hyperelliptic curve which passes through the 4 base points and is canonically embedded by $|3K_{X}|$. Thus the points stay distinct. 
\end{proof}
Note that the image points span $\PP^{3}$ since the homogeneous ideal $J$ of  $\Proj(\bar{R}^{(0)})$ in $T$ contains no linear forms.

\begin{definition}
	Let  $X$ be a numerical Godeaux surface satisfying the equivalent conditions of Proposition \ref{prop_distinct_points}.
	A \textit{marking} on $X$ is an enumeration $p_{0},\ldots,p_{3}$ of the base points of $|2K_{X}|$. 
	A \textit{marked} numerical Godeaux surface is a numerical Godeaux surface together with a marking.
\end{definition}

A marked numerical Godeaux surface can only have the torsion groups $0, \ZZ/3\ZZ$ or $\ZZ/5\ZZ$ because a divisor $D\in |K+\tau|$ with $2\tau=0$  leads to a fiber $2D \in |2K_{X}|$ and hence to an everywhere non-reduced base locus.
The moduli space of marked numerical Godeaux surfaces is an $S_{4}$-cover of an open part of the moduli space of numerical  Godeaux surfaces. Introducing a marking allows us to change coordinates such that $|3K_{X}|$ maps the base points to the coordinate points of of $\PP^{3}$:
\[p_0 \mapsto (1:0:0:0),\ p_1 \mapsto (0:1:0:0),\ p_2 \mapsto (0:0:1:0) \textup{ and } p_3 \mapsto (0:0:0:1).\]
The stabilizer $G \leq \Aut(\P^3) = \PGL(4,\Bbbk)$ of the coordinate points as a set is $G \cong (\Bbbk^{\ast})^{3} \rtimes S_4$. 

\begin{proposition} \label{descriptionFmod}
	Let $X$ be a marked Godeaux surface. Then after a change of coordinates of $\PP(2^{2},3^{4},4^{4},3^{5})$ and a change of basis of the minimal free resolution $\bf{F}$ of $R(X)$ we may assume that the summands of
	$$\bar{\bf{F}}= \bf{F}/(x_{0},x_{1})=\bar{\bf{F}}^{(0)}\oplus\bar{\bf{F}}^{(1)}\oplus \bar{\bf{F}}^{(2)}$$ as 
	$T=\Bbbk[y_{0},\ldots,y_{3}]=S/(x_{0},x_{1})$ are the following complexes
	\begin{enumerate}
		\item[(0)] $\bar{\bf{F}}^{(0)}$ is  the \Mac2 resolution 
		$$T \leftarrow 6T(-2\cdot 3) \leftarrow 8T(-3\cdot 3)\leftarrow 3T(-4\cdot 3) \leftarrow 0, $$
		of $T/J$, where 
		\[J=(y_iy_j \mid 0 \leq i < j \leq 3 ) = \bigcap_{i=0}^{3} J_i
		\quad \hbox{ and } \quad J_{i}=(y_{j}| j\not=i).\]
		\item[(1)] $\bar{\bf{F}}^{(1)}$ is up to a twist the sum of 4 Koszul complexes
		$$T(-4) \leftarrow 3T(-4-3) \leftarrow 3T(-4-2\cdot3 )\leftarrow T(-4-3\cdot3) \leftarrow 0 $$
		resolving $T/J_{i}$,
		\item[(2)] $\bar{\bf{F}}^{(2)}= (\bar{\bf{F}}^{(0)})^{\vee}=\Hom(\bar{\bf{F}}^{(0)},T(-17))[-3]$ is up to twist and shift the dual of $\bar{\bf{F}}^{(0)}$. 
	\end{enumerate}
\end{proposition}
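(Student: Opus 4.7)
The plan is to exploit the $\mathbb{Z}/3\mathbb{Z}$-grading on $\bar R = R(X)/(x_0,x_1)R(X)$ inherited from the $\mathbb{Z}$-grading on $R(X)$ to split both the module and its minimal free resolution into three pieces indexed by residues modulo~$3$, and then identify each piece. Since $T = \Bbbk[y_0,\ldots,y_3]$ is concentrated in degrees divisible by $3$, any summand $T(-d)$ occurring in the minimal resolution of a graded $T$-submodule generated in internal degree $g$ must satisfy $d \equiv g \pmod{3}$. Reading the Betti table of Proposition~\ref{prop_algebracanonical} with this rule yields Betti numbers $(1,6,8,3)$ in internal degrees $(0,6,9,12)$ for $\bar{\mathbf{F}}^{(0)}$, $(4,12,12,4)$ in $(4,7,10,13)$ for $\bar{\mathbf{F}}^{(1)}$, and $(3,8,6,1)$ in $(5,8,11,17)$ for $\bar{\mathbf{F}}^{(2)}$, confirming the three-summand decomposition at the level of the resolution.

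For part~(0), Proposition~\ref{prop_distinct_points} identifies $\Proj(\bar R^{(0)})$ with the four base points of $|2K_X|$ embedded as distinct points in $\PP^3$. The marking together with a $\PGL(4)$-transformation of $\Proj(T) = \PP^3$ moves these points to the coordinate points, forcing $\bar R^{(0)} = T/J$ with $J = \bigcap_i J_i = (y_iy_j \mid i<j)$, whose minimal free resolution over $T$ is precisely the stated \emph{Macaulay2} resolution.

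For part~(2), I would invoke the Gorenstein self-duality of Theorem~\ref{prop_structure}: since $x_0,x_1$ is a regular sequence on $R(X)$, the self-duality of $\mathbf{F}$ descends to a self-dual resolution of $\bar R$ over $T$ with dualizing twist $-17$. The induced pairing sends internal degree $k$ to $17-k \equiv 2-k \pmod{3}$, which exchanges $\bar{\mathbf{F}}^{(0)}$ and $\bar{\mathbf{F}}^{(2)}$ while fixing $\bar{\mathbf{F}}^{(1)}$; hence $\bar{\mathbf{F}}^{(2)} \cong \Hom_T(\bar{\mathbf{F}}^{(0)}, T(-17))[-3]$.

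Part~(1) is the main step. By Auslander--Buchsbaum and the Betti numbers above, $\bar R^{(1)}$ is Cohen--Macaulay of dimension~$1$ over $T$, with support contained in $\Proj(\bar R^{(0)}) = \{p_0,\ldots,p_3\}$. Having no embedded primes, $\bar R^{(1)}$ decomposes as a graded $\bar R^{(0)}$-module $\bar R^{(1)} = \bigoplus_i M_i$ with each $M_i$ supported only at $p_i$, and each $M_i$ is a finitely generated torsion-free (hence free) graded module over $\bar R^{(0)}/P_i \cong \Bbbk[y_i]$. Base-point-freeness of $|4K_X|$ forces some element of $\bar R^{(1)}_4$ to be nonzero at each $p_i$, so $M_i \neq 0$ for every $i$. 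Combining this non-vanishing with the Hilbert-series identity $\sum_i H(M_i,t) = 4t^4/(1-t^3)$, read off from the Betti numbers of $\bar R^{(1)}$, forces each $M_i$ to have rank exactly~$1$ with its generator in degree~$4$, so $M_i \cong T/J_i(-4)$. Choosing $z_0,\ldots,z_3$ as the basis of $\bar R^{(1)}_4$ dual to the evaluations at the $p_i$ then realizes $z_i$ as the generator of $M_i$, and the direct sum of the four Koszul resolutions yields the claimed $\bar{\mathbf{F}}^{(1)}$. The main obstacle is this last non-vanishing step, which ultimately rests on the classical base-point-freeness of $|4K_X|$ for surfaces of general type.
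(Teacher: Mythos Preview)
Your overall strategy matches the paper's: split by the $\ZZ/3\ZZ$-grading, normalise part~(0) via the marking, obtain part~(2) by duality, and for part~(1) use that the support is contained in the four coordinate points together with base-point-freeness of $|4K_X|$ and a Hilbert-series count. Parts~(0) and~(2) are fine.

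The gap is in part~(1), at the sentence ``Having no embedded primes, $\bar R^{(1)}$ decomposes as a graded $\bar R^{(0)}$-module $\bar R^{(1)} = \bigoplus_i M_i$ with each $M_i$ supported only at $p_i$.'' This implication is false in general: $T/J$ itself is a Cohen--Macaulay $T/J$-module of dimension~$1$ with associated primes exactly $\{J_0,\dots,J_3\}$, yet it does \emph{not} split as $\bigoplus_i T/J_i$ (compare Hilbert series, or note that $T/J$ is cyclic while the direct sum needs four generators). The point is that although the \emph{sheaf} $\widetilde{\bar R^{(1)}}$ on $\PP^3$ splits, the graded module need not coincide with $\bigoplus_{d} H^0(\PP^3,\widetilde{\bar R^{(1)}}(d))$; the obstruction is $H^1_{\mathfrak m}(\bar R^{(1)})$, which is nonzero for $T/J$.

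The paper closes this gap by observing that $N=\bar R^{(1)}(4)$ has a \emph{linear} minimal free resolution over $T$ with the standard grading (Betti numbers $4,12,12,4$), hence is $0$-regular, hence $N_d = H^0(\PP^3,\widetilde N(d))$ for all $d\ge 0$. Then the sheaf decomposition $\widetilde N \cong \bigoplus_i \sO_{p_i}^{r_i}$ lifts to $N \cong \bigoplus_i (T/J_i)^{r_i}$, and the dimension count $\dim N_d = 4$ forces $r_i=1$ for all $i$. Your base-point-freeness argument (each $r_i\ge 1$) is the same as the paper's Lemma on $\ann_T(\bar R^{(1)})=J$, and your Hilbert-series step is correct once the splitting is in hand; you are only missing the $0$-regularity bridge from sheaf to module.
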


\begin{proof} Since $x_{0},x_{1}$ are a regular sequence on $R(X)$ and  on $S$, the complex
	\[
	0 \leftarrow \bar R(X) \leftarrow \bar F_0 \xleftarrow{\bar d_1} \bar F_1 \xleftarrow{\bar d_2} \bar F_1^{\vee} \xleftarrow{\bar d_1^t} \bar F_0^{\vee} \leftarrow 0
	\]
	is still exact. We choose coordinates on $\PP(3^{4})$ such that $$\bar R^{(0)} = T/J \hbox{ with } J=(y_iy_j \mid 0 \leq i < j \leq 3 )$$
	holds. Then we can choose a basis of ${\bf{F}}$ such that $\bar{\bf{F}}^{(0)}$ has the desired shape and $\bar{\bf{F}}^{(2)}= (\bar{\bf{F}}^{(0})^{\vee}$ holds. It remains to normalize $\bar{\bf{F}}^{(1)}$. We start to prove that the sheaf associated to $\bar R^{(1)}$ on $\PP(3^{4})= \PP^{3}$ has support in the coordinate points.
	
	\begin{lemma}\label{lem_annihilator}
		$\ann_T(\bar{R}^{(1)})  = J$.
	\end{lemma}
	\begin{proof}
		We know that
		\[ \ann_T (\bar R)  =\bigcap_{i=0}^{2} \ann_T(\bar R^{(i)})=\ann_T(\bar R^{(0)}) = J\]
		since $\bar R$ is a $T$-algebra, in particular $J \subset \ann_T(\bar R^{(1)})$.  Thus $V(\ann_T(\bar R^{(1)})) \subset V(J)$ is clear, and it is enough to prove equality of the vanishing loci, since $J$ is a radical ideal.
		
		Suppose that there is a coordinate point $p_i$ which is not contained in $V(\ann_T(\bar R^{(1)}))$. Then there exists an integer $n_i \geq 1$ such that $y_i^{n_i} \in  \ann_T(\bar R^{(1)})$. 
		Hence for each $z \in R_{4}=H^{0}(X,4K_{X})$ there is a relation of the form
		\[ r_{0}x_0 + r_{1}x_1 + y_i^{n_i}z  = 0\]
		in $R(X)$, where $r_{0},r_{1} \in R(X)$.
		Since $y_i^{n_i}(p_i) \neq 0$, all forms $z\in H^{0}(X,4K_{X})$ must vanish at the point $p_i \in \canmod$. But 
		$|4K_{X}|$ is base point free by \cite{Bombieri}, Theorem 5.2, a contradiction.
	\end{proof}
	
	Now let $N= \bar R^{(1)}(4)$ and consider the associated sheaf $\widetilde N$ on $\PP(3^{4})$.
	As a module over $T=\Bbbk[y_{0},\ldots,y_{3}]$ with the standard grading $\deg y_{i}=1$ the module  $N$ has a linear resolution
	$$ 0 \leftarrow N \leftarrow T^4 \leftarrow 
	T(-1)^{12} \leftarrow T(-2)^{12} \leftarrow T(-3)^4 \leftarrow 0. $$
	Thus in this grading $N$ is a $0$-regular Cohen-Macaulay module. Hence
	$$N= \oplus_{d\ge 0} H^{0}(\PP^{3},\widetilde N(d)).$$ 
	Since $J$ and $J_{i}$ generate the same ideal in $T_{y_{i}}$, the restriction of $\widetilde N$ to the affine chart $U_{i}=\{y_{i}\not=0\}$
	is $\sO_{p_{i}}^{r_{i}}$ for some $r_{i}\ge 1$ and
	$$\widetilde N \cong \bigoplus_{i=0}^{3} \sO_{p_{i}}^{r_{i}}.$$
	Since $H^{0}(\PP^{3},\widetilde N(d))=N_{d}$ is 4-dimensional for each $d\ge1$ we conclude $r_{i}=1$ for all $i$ and 
	$$N= \bigoplus_{i=0}^{3} T/J_{i}.$$
	Thus $\bar R^{(1)}= \bigoplus_{i=0}^{3} T/J_{i}(-4)$ holds
	with respect to the grading of $S$. Hence for suitable chosen generators for $H^{0}(X,4K_{X})$ and suitable basis of $\bf{F}$ the complex $\bar{\bf{F}}^{(1)}$ has the desired shape.
\end{proof}

\begin{remark} 
	The \Mac2-choice of the resolution of $J_{i}=(y_{j}\mid j\not=i)$ has not a skew-symmetric middle matrix.
	We take the following resolution of $T/J_{0}(-4)$  
	\[T(-4) \xleftarrow{\begin{pmatrix}
		y_1 & y_2 & y_3 
		\end{pmatrix}} T(-7)^3 \xleftarrow{\begin{pmatrix}
		0&{y}_{3}&
		{-{y}_{2}} \\
		-y_3 & 0 & y_1 \\
		y_2 & -y_1 & 0
		\end{pmatrix}} T(-10)^3 \xleftarrow{\begin{pmatrix}
		y_1 \\ y_2 \\ y_3 
		\end{pmatrix}}
	T(-13)^3 \leftarrow 0.\]
	and similarly for the other $T/J_{i}(-4)$. 
	
	The \Mac2-choice of the resolution of $T/J$ has the following differentials
	\begin{align}
	T & \xleftarrow{
		\begin{pmatrix}
		y_0y_1&
		{y}_{0} {y}_{2}&
		{y}_{1}  {y}_{2}&
		{y}_{0}  {y}_{3}&
		{y}_{1}  {y}_{3}&
		{y}_{2}  {y}_{3}
		\end{pmatrix}}  T(-6)^{6} \label{eq_d1mod1}
	\\[13pt]
	T(-6)^{6} & \xleftarrow{\begin{pmatrix}
		{-{y}_{2}}&
		0&
		{-{y}_{3}}&
		0&
		0&
		0&
		0&
		0\\
		{y}_{1}&
		{-{y}_{1}}&
		0&
		0&
		{-{y}_{3}}&
		0&
		0&
		0\\
		0&
		{y}_{0}&
		0&
		0&
		0&
		0&
		{-{y}_{3}}&
		0\\
		0&
		0&
		{y}_{1}&
		{-{y}_{1}}&
		{y}_{2}&
		{-{y}_{2}}&
		0&
		0\\
		0&
		0&
		0&
		{y}_{0}&
		0&
		0&
		{y}_{2}&
		{-{y}_{2}}\\
		0&
		0&
		0&
		0&
		0&
		{y}_{0}&
		0&
		{y}_{1}
		\end{pmatrix}} T(-9)^{8} \label{eq_d1mod2}
	\\[13pt] 
	T(-9)^{8} & \xleftarrow{ 
		{\begin{pmatrix}
			{-{y}_{3}}&0&{y}_{2}&0&{-{y}_{1}}&0&0&0\\
			{-{y}_{3}}&{-{y}_{3}}&{y}_{2}&{y}_{2}&0&0&{-{y}_{0}}&0\\
			0&0&0&{-{y}_{2}}&0&{y}_{1}&0&{-{y}_{0}}\end{pmatrix}}^t
	} T(-12)^{3} \label{eq_d1mod3}
	\end{align}
\end{remark}

\begin{definition}We call any minimal free resolution of the type
	\begin{equation}\label{eq_standardres}
	0 \leftarrow R(X) \leftarrow F_0 \xleftarrow{d_1} F_1 \xleftarrow{d_2} F_1^\vee \xleftarrow{d_1^t} F_0^{\vee} \leftarrow 0
	\end{equation}
	with a skew-symmetric middle matrix $d_2$ and 
	which restricts with $x_0=x_1=0$ to the complex described above a \textit{standard resolution} of $R(X)$. 
\end{definition}
\begin{remark}
	Over an algebraically closed field, the canonical ring of  any marked numerical Godeaux surface admits a standard resolution. Note that such a standard resolution is in general not unique.
\end{remark}

In a standard resolution the red parts of \ref{not_gensetup} are completely determined. We now plan to determine the remaining blue parts of the resolution by using (unfolding) parameters for their entries and analyzing their relations
which are imposed by the condition $d_{1}d_{2}=0$. We start analyzing the relations imposed by
$d_{1}'d_{2}=0$, where $d_{1}'$
is the matrix obtained from $d_1$ by erasing the first row.  The first row of $d_{1}$ will be added at the last step in our construction, since we can recover it from $d_{2}$ by a syzygy computation.

\begin{center}
	\bgroup
	\renewcommand*{\arraystretch}{1.7}
	$0 =  d_1'd_2=\left(\begin{array}{c|c|c} 
	{\color{blue} ao} -{\color{red} b_1(y)}{\color{blue}n^{t}} - {\color{blue} c}{\color{red} b_3(y)^{t}} & {\color{blue} an}\color{black} -{\color{blue} cp^{t}} & {\color{blue} a}{\color{red} b_3(y)}+{\color{red} b_1(y)}{\color{blue} p} \\ \hline
	-{\color{blue} en^{t}} & {\color{blue} e}{\color{red} b_4(y)} -{\color{red} b_2(y)}{\color{blue}p^{t}} & {\color{blue} e}{\color{blue} p} \end{array} \right). $ 
	\egroup
\end{center}

Using the order of $p_0,\ldots,p_3$, we introduce a natural labeling $a_{i,j}^{(k)}$ for the $24$ entries of the matrix $a$ 
as indicated below in the $(1+4)\times (6+12)$ submatrix of $d_{1}$:
\begin{center}
	\begingroup
	\renewcommand*{\arraystretch}{1.6}
	$ \left( \begin{array}{cccccc|cccc}
	\color{red} {y}_{0} \color{red} {y}_{1}\color{blue}+\ast&
	\color{red}{y}_{0} \color{red}{y}_{2}\color{blue}+\ast&
	\color{red} {y}_{1} \color{red} {y}_{2}\color{blue}+\ast&
	\color{red} {y}_{0} \color{red} {y}_{3}\color{blue}+\ast&
	\color{red} {y}_{1} \color{red} {y}_{3}\color{blue}+\ast&
	\color{red} {y}_{2} \color{red} {y}_{3}\color{blue}+\ast& \multicolumn{4}{c}{\color{blue}\ast} \\ \hline
	\color{blue}		a^{(0)}_{0,1}&
	\color{blue}		a^{(0)}_{0,2}&
	\color{blue}		a^{(0)}_{1,2}&
	\color{blue}		a^{(0)}_{0,3}&
	\color{blue}		a^{(0)}_{1,3}&
	\color{blue}		a^{(0)}_{2,3} & \color{red} D_0 & \color{red} 0 & \color{red}0 & \color{red}0 \\ 
	\color{blue}		a^{(1)}_{0,1}&
	\color{blue}		a^{(1)}_{0,2}&
	\color{blue}		a^{(1)}_{1,2}&
	\color{blue}		a^{(1)}_{0,3}&
	\color{blue}		a^{(1)}_{1,3}&
	\color{blue}		a^{(1)}_{2,3} & \color{red}0 & \color{red}D_1 & \color{red}0 & \color{red}0\\
	\color{blue}		a^{(2)}_{0,1}&
	\color{blue}		a^{(2)}_{0,2}&
	\color{blue}		a^{(2)}_{1,2}&
	\color{blue}		a^{(2)}_{0,3}&
	\color{blue}		a^{(2)}_{1,3}&
	\color{blue}		a^{(2)}_{2,3}  & \color{red}0 & \color{red}0 & \color{red}D_2 & \color{red}0\\
	\color{blue}		a^{(3)}_{0,1}&
	\color{blue}		a^{(3)}_{0,2}&
	\color{blue}		a^{(3)}_{1,2}&
	\color{blue}		a^{(3)}_{0,3}&
	\color{blue}		a^{(3)}_{1,3}&
	\color{blue}		a^{(3)}_{2,3}& \color{red}0 & \color{red}0 & \color{red}0 & \color{red}D_3 
	\end{array} \right)$ \endgroup 
\end{center}
with
${D_0} =  \begin{pmatrix}
y_1 & y_2 & y_3
\end{pmatrix}$, 
${D_1} =  \begin{pmatrix}
y_0 & y_2 & y_3
\end{pmatrix}$, 
${D_2} =   \begin{pmatrix}
y_0 & y_1 & y_3
\end{pmatrix}$ and 
${D_3} =  \begin{pmatrix}
y_0 & y_1 & y_2
\end{pmatrix}.$ 
Imposing the condition $d_1'd_2=0$ on the general setup for the matrices, we see that 12 out of the 24 $a$-variables are a priori zero: 
\begin{proposition}\label{prop_matrixazero}
	Let $d_1'$ and $d_2$ be as in Notation \ref{not_gensetup} in our standard form satisfying $d_1'd_2 = 0$. If $k \notin \{i,j\}$, then $a^{(k)}_{i,j} = 0$. Furthermore, every non-zero entry of the matrix $p$ and $e$ can be expressed up to a sign by one of the 12 remaining $a$-variables.
\end{proposition}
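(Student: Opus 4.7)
The plan is to read off both statements from a single block of the displayed identity $d_{1}'d_{2}=0$, namely the top-right block
\[
a\,b_{3}(y) + b_{1}(y)\,p = 0,
\]
which is a $4\times 8$ matrix equation of bi-degree $(1,1)$ in $(x,y)$. In the standard form of Notation \ref{not_gensetup} the matrix $b_{1}(y)$ is block diagonal with the $1\times 3$ blocks $D_{k}=(y_{j})_{j\neq k}$, so its $k$-th row contains no $y_{k}$; equivalently, the coefficient of $y_{k}$ in the $k$-th row of $b_{1}\,p$ vanishes identically.

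First I would collect, for each $k\in\{0,1,2,3\}$, the coefficient of $y_{k}$ in the $k$-th row of $a\,b_{3}(y)$ column by column. Each column of $b_{3}$ as exhibited in \eqref{eq_d1mod2} encodes a Koszul-type syzygy $y_{r}\cdot y_{i}y_{\ell}\pm y_{s}\cdot y_{i'}y_{\ell'}=0$ among the generators of $J=(y_{i}y_{j})$. Hence $y_{k}$ appears at row $(i,\ell)$ of some column of $b_{3}$ precisely when $k\notin\{i,\ell\}$, and for each such pair this happens in exactly one column. Forcing the $y_{k}$-coefficient of row $k$ of the identity to be zero therefore produces three scalar equations $a^{(k)}_{i,j}=0$, one for each of the three pairs $(i,j)$ with $k\notin\{i,j\}$. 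Running $k$ through $0,1,2,3$ yields the $12$ claimed vanishings.

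For the second assertion I would revisit the same identity and extract the coefficients of $y_{m}$ with $m\neq k$. In row $k$, column $c$ this reads $\pm a^{(k)}_{i,j}+(\text{entry of }p)=0$, where the entry of $p$ is singled out by which position of $D_{k}$ equals $y_{m}$; so every non-zero entry of $p$ is forced to be $\pm$ one of the $12$ surviving $a^{(k)}_{i,j}$ (with $k\in\{i,j\}$), and all remaining entries of $p$ are zero. To obtain the analogous description for $e$ I would pass to the bottom-middle block $e\,b_{4}(y)-b_{2}(y)\,p^{t}=0$. Because $b_{4}$ is block diagonal with the four $3\times 3$ skew-symmetric Koszul middle matrices $b_{4}^{(k)}$, this decouples into four matrix equations $e^{(k)}\,b_{4}^{(k)}=b_{2}\,(p^{(k)})^{t}$, one for each Koszul block; the left kernel of $b_{4}^{(k)}$ is spanned by the $y$-linear row $D_{k}$, so within $3\times 3$ matrices whose entries lie in $\Bbbk\{x_{0},x_{1}\}$ the equation uniquely determines $e^{(k)}$. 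A direct comparison of coefficients then shows that each non-zero entry of $e^{(k)}$ is $\pm a^{(k)}_{k,j}$ for a single $j\neq k$.

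The main obstacle is the combinatorial bookkeeping in the first step: one must verify directly from the explicit matrix \eqref{eq_d1mod2} that the $y_{k}$-pattern of $b_{3}$ selects exactly the three ``wrong'' pairs $(i,j)$ with $k\notin\{i,j\}$ and no others. Once that matching has been checked, everything else reduces to reading off coefficients of a bi-linear identity together with a small linear-algebra solve for each $e^{(k)}$.
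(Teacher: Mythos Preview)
Your approach is correct and coincides with what the paper indicates: the paper's own proof merely points to the two block relations $a\,b_3(y)+b_1(y)\,p=0$ and $e\,b_4(y)-b_2(y)\,p^{t}=0$ and defers the actual verification either to a \emph{Macaulay2} computation or to \cite{Stenger18}, Section~7.1; you are supplying those details by hand. One small inaccuracy to fix when you write it up: the claim that for each pair $(i,\ell)$ with $k\notin\{i,\ell\}$ the variable $y_k$ appears in row $(i,\ell)$ of $b_3$ in \emph{exactly one} column is not literally true (for instance $y_1$ occurs in two columns of the row indexed by $(0,2)$ in \eqref{eq_d1mod2}), but this is harmless since any single occurrence already forces $a^{(k)}_{i,\ell}=0$, and you acknowledge the bookkeeping must be checked directly.
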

\begin{proof} 
	The proof of this statement can be deduced either theoretically or computationally by evaluating the relations ${\color{blue} e}{\color{red} b_4(y)} -{\color{red} b_2(y)}{\color{blue}p^{t}} =0$ and ${\color{blue} a}{\color{red} b_3(y)}+{\color{red} b_1(y)}{\color{blue} p} = 0$.  For a theoretical treatment we refer to \cite{Stenger18}, Section 7.1. A \Mac2 computation  is given by the procedure \href{https://www.math.uni-sb.de/ag/schreyer/images/data/computeralgebra/M2/doc/Macaulay2/NumericalGodeaux/html/_get__Relations__And__Normal__Form.html}{getRelationsAndNormalForm} 
	from our \Mac2 package \href{https://www.math.uni-sb.de/ag/schreyer/images/data/computeralgebra/M2/doc/Macaulay2/NumericalGodeaux/html/}{NumericalGodeaux} (see \cite{SS20}).
\end{proof}
Proposition \ref{prop_matrixazero} shows that for a non-zero $a$-variable, one of the lower indices is equal to the upper index. Thus, to simply our notation, we will denote $a^{(i)}_{i,j} $ by $a_{i,j}$ and $a^{(j)}_{i,j} $ by $a_{j,i}$. 
Consequently, we assume from now on that the $a$-matrix is of the form
\begin{center}
	\begingroup
	\color{blue}
	$a =  \left(\begin{array}{cccccc}
	a_{0,1}&
	a_{0,2}&
	0&
	a_{0,3}&
	0&
	0  \\
	a_{1,0}&
	0&
	a_{1,2}&
	0&
	a_{1,3}&
	0 \\
	0&
	a_{2,0}&
	a_{2,1}&
	0 &
	0&
	a_{2,3}\\
	0&
	0&
	0&
	a_{3,0}&
	a_{3,1}&
	a_{3,2} \end{array}\right)$\endgroup. \end{center}
Furthermore, expressing any $e$- and $p$-variable by one of the $a$-variables, 
we obtain a new matrix of relations:
\begin{center}
	\bgroup
	\renewcommand*{\arraystretch}{1.7}
	$d_1'd_2=\left(\begin{array}{c|c|c} 
	{\color{blue} ao} -{\color{red} b_1(y)}{\color{blue}n^{tr}} - {\color{blue} c}{\color{red} b_3(y)^{tr}} & {\color{blue} an}\color{black} -{\color{blue} cp^{tr}} & 0 \\ \hline
	-{\color{blue} en^{tr}} & 0 & {\color{blue} e}{\color{blue} p} \end{array} \right). $ 
	\egroup
\end{center}
In particular, we see that if the entries of the matrix $a$ are known and satisfy the equation $ep = 0$, all the remaining relations are linear in the unknown
$n, c$ and $o$-variables. 

Now let us recall that a possible entry of the matrix $a$ is a linear combination of $x_0,x_1$ with coefficients in $\Bbbk$. We will think of  these coefficients as Stiefel coordinates, hence as the entries of $2\times 12$-matrices having at least one non-vanishing maximal minor. 
\begin{notation}
	For a matrix $\hat \ell \in \St(2,12)$, we denote by $\ell$ the  line in $\P^{11}$  spanned by the rows of $\hat \ell$.
\end{notation}
An assignment to the 12 remaining $a$-variables gives a matrix $\hat \ell \in \St(2,12)$, and hence a line $\ell \subset \P^{11}$. 
We have to choose lines in $\P^{11}$ such that the quadratic relations coming from $ep = 0$ are satisfied.
Using again our procedure  \href{https://www.math.uni-sb.de/ag/schreyer/images/data/computeralgebra/M2/doc/Macaulay2/NumericalGodeaux/html/_get__Relations__And__Normal__Form.html}{getRelationsAndNormalForm},
we see that there are exactly 4 different forms  
\begin{align*}
q_0 &=  a_{1,2} a_{1,3}\color{black} -a_{2,1} a_{2,3}+a_{3,1} a_{3,2}, \\[8pt]
q_1 &=  a_{0,2} a_{0,3}
-a_{3,0} a_{3,2}
+a_{2,0} a_{2,3}, \\[8pt]
q_2 &= a_{1,0} a_{1,3} -   a_{0,1} a_{0,3}
+a_{3,0} a_{3,1}, \\[8pt]
q_3 &= a_{0,1} a_{0,2}-a_{1,0} a_{1,2}+a_{2,0} a_{2,1}
\end{align*}
which the assignment of the remaining 12 $a$-variables have to satisfy.
The $q_{i}$ are quadrics of rank 6. Hence, there are skew-symmetric matrices $M_0,\ldots,M_3$ of size 4 so that $q_0,\ldots,q_3$ are the Pfaffians of these matrices. One possible choice for these skew-symmetric matrices is: 
\begin{center}
	\begingroup
	\renewcommand*{\arraystretch}{1.4}
	$M_0 = \left( \begin{array}{cccc}0&
	a_{1,2}&
	a_{2,1}&
	a_{3,1}\\
	&0&
	a_{3,2}&
	a_{2,3}\\
	\multicolumn{2}{l}{}  &
	0&
	a_{1,3}\\
	&&&
	0\end{array} \right), \ \ \ \
	M_1 =  \left(  \begin{array}{cccc}0&
	a_{0,2}&
	a_{3,0}&
	a_{2,0}\\
	&
	0&
	a_{2,3}&
	a_{3,2}\\
	\multicolumn{2}{l}{}  &
	0&
	a_{0,3}\\
	&&&
	0 \end{array} \right),$
	\endgroup
\end{center}
\bigskip
\begin{center}
	\begingroup
	\renewcommand*{\arraystretch}{1.4}
	$M_2 = \left( \begin{array}{cccc} 0&
	a_{1,0}&
	a_{0,1}&
	{a_{3,0}}\\
	&
	0&
	a_{3,1}&
	a_{0,3}\\
	\multicolumn{2}{l}{} &
	0&
	a_{1,3}\\
	&&&
	0\end{array} \right), \ \
	M_3 = \left(  \begin{array}{cccc} 0&
	a_{0,1}&
	a_{1,0}&
	a_{2,0}\\
	&
	0&
	a_{2,1}&
	a_{1,2}\\
	\multicolumn{2}{l}{} & 
	0&
	a_{0,2}\\
	&&& 
	0\end{array} \right).$
	\endgroup
\end{center}

The corresponding variety $Q = V(q_0,\ldots,q_3) \subset \P^{11}$  is an irreducible complete intersection. 
Indeed, the 7-dimensional variety $Q$ is irreducible because its singular loci has codimension $8>5$ in $\PP^{11}$. 
The first step of our construction method for a marked numerical Godeaux surface $X$ is to choose a line $\ell \subset Q$.

\begin{lemma}\label{lem_lineQ} Let $\hat \ell \in \St(2,12)$ be a matrix. Then
	\[e((x_{0},x_{1})\hat \ell)p((x_{0},x_{1})\hat \ell) = 0 \Longleftrightarrow \ell \subset Q\subset \P^{11}.\]
\end{lemma}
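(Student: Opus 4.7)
The plan is to reduce both directions of the biconditional to the same system of scalar equations on the entries of $\hat\ell$, passing through an intermediate description of $ep$ as a matrix of quadratic forms in the twelve $a$-variables.

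First I would treat $e$ and $p$ as matrices whose non-zero entries are $\pm$ one of the twelve indeterminates $a^{(k)}_{i,j}$, which is exactly the content of Proposition \ref{prop_matrixazero}. The product $ep$ then has entries that are homogeneous quadratic forms in those indeterminates, and the explicit computation already invoked in the proof of Proposition \ref{prop_matrixazero}, carried out by the procedure \href{https://www.math.uni-sb.de/ag/schreyer/images/data/computeralgebra/M2/doc/Macaulay2/NumericalGodeaux/html/_get__Relations__And__Normal__Form.html}{getRelationsAndNormalForm}, shows that up to sign and repetition the non-zero entries of $ep$ are exactly the four forms $q_0, q_1, q_2, q_3$ listed just above the statement. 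Consequently, $ep$ vanishes as a matrix over $\Bbbk[a^{(k)}_{i,j}]$ if and only if $q_0 = q_1 = q_2 = q_3 = 0$ in that ring.

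Next I would specialize the $a$-variables so that the row vector formed by the twelve $a^{(k)}_{i,j}$ equals $(x_0,x_1)\hat\ell$; concretely, each $a^{(k)}_{i,j}$ is replaced by the linear form in $x_0, x_1$ whose coefficients are the entries of the corresponding column of $\hat\ell$. Under this substitution each $q_r$ becomes a binary quadratic form $q_r((x_0,x_1)\hat\ell) \in \Bbbk[x_0,x_1]_2$, and by the previous step the matrix identity $e((x_0,x_1)\hat\ell)\,p((x_0,x_1)\hat\ell) = 0$ in $S$ is equivalent to the vanishing of each $q_r((x_0,x_1)\hat\ell)$ as an element of $\Bbbk[x_0,x_1]$.

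Finally I would invoke the elementary fact that a quadric $q$ on $\P^{11}$ vanishes identically on the line $\ell$ parametrized by $(x_0{:}x_1)\mapsto (x_0,x_1)\hat\ell$ precisely when its pull-back $q((x_0,x_1)\hat\ell)\in \Bbbk[x_0,x_1]$ is the zero polynomial; applied to each $q_r$ this yields $\ell \subset V(q_r)$ for every $r$, i.e., $\ell \subset Q$. The only non-formal step in the whole argument is the matching between the non-zero entries of $ep$ and the four quadrics $q_r$, which amounts to a finite symbolic check on a $3\times 8$ matrix, and I would regard it as routine bookkeeping once the normal form of $e$ and $p$ from Proposition \ref{prop_matrixazero} is in hand.
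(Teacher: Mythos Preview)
Your argument is correct and is essentially a careful unpacking of the paper's one-line proof, which just says ``Clear from the definition of the variety $Q \subset \P^{11}$.'' Since $Q$ is by definition $V(q_0,\ldots,q_3)$ and the $q_r$ are exactly (up to sign and repetition) the non-zero entries of $ep$, your three-step reduction is precisely the content the authors leave implicit.
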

\begin{proof} Clear from the definition of  the variety $Q \subset \P^{11}$.
\end{proof}

After the choice of a parametrized line $\ell \subset Q$, hence the choice of $a$, $e$ and $p$, 
the second step of our construction consists in solving the remaining equations
\begin{eqnarray*}
	{\color{red} a}{\color{blue}o} -{\color{red} b_1(y)}{\color{blue}n^{t}} - {\color{blue} c}{\color{red} b_3(y)^{t}}&=&0,  \\
	{\color{red} a}{\color{blue}n}\color{black} -{\color{blue} c}{\color{red}p^{t}}&=&0, \\
	-{\color{red}e}{\color{blue} n^{t}} &=&0.
\end{eqnarray*}
This is a system of linear equations for the $c$-, $o$- and $n$-variables.

We end this section by introducing a normal form for the $o$-matrix which is the $6\times6$ skew-symmetric submatrix of $d_2$ whose entries are homogeneous of degree $5$.  Let 
\[
0 \leftarrow R(X) \leftarrow F_0 \xleftarrow{d_1} F_1 \xleftarrow{d_2} F_1^{\vee} \xleftarrow{d_1^t} F_0^{\vee} \leftarrow 0
\]
be a standard resolution of $R(X)$ and define the maps
\[ \alpha_0 = \id_{F_0}, \ \alpha_1 =  \begin{pmatrix} \id_6 & & \gamma \\ & \id_{12} & \\ & & \id_8 \end{pmatrix}, \ \alpha_2 =  \begin{pmatrix} \id_6 & &  \\ & \id_{12} & \\ - \gamma & & \id_8 \end{pmatrix} \textup{ and } \alpha_3 = \id_{F_0^{\vee}},\] where 
$\gamma$ is a $6\times 8$ matrix whose entries are  linear forms in $x_0,x_1$. Then, setting $e_1 = \alpha_0d_1\alpha_1^{-1}$ and $e_2 = \alpha_1d_2\alpha_2^{-1}$, we obtain another isomorphic standard resolution of $R(X)$
\[
0 \leftarrow R(X) \leftarrow F_0 \xleftarrow{e_1} F_1 \xleftarrow{e_2} F_1^{\vee} \xleftarrow{e_1^t} F_0^{\vee} \leftarrow 0,
\]
with a new skew-symmetric matrix   $o+ b_3\gamma^{t}-\gamma b_3^{t}$. Motivated by this, we can reduce the original $o$-matrix by the $6\times8$ matrix $b_3(y)$ and its transpose in a way so that we keep the skew-symmetry and obtain a new matrix which depend only on 12 $o$-variables instead of $60 = 4\cdot\binom{6}{2}$ bilinear $o$-variables 
\[\begin{pmatrix}
0&{o}_{1,0,0}{y}_{0}&{o}_{2,0,1}{y}_{1}&{o}_{3,0,0}{y}_{0}&{o}_{4,0,1}{y}_{1}&0\\
&0&{o}_{2,1,2}{y}_{2}&{o}_{3,1,0}{y}_{0}&0&{o}_{5,1,2}{y}_{2}\\
&&0&0&{o}_{4,2,1}{y}_{1}&{o}_{5,2,2}{y}_{2}\\
&&&0&{o}_{4,3,3}{y}_{3}&{o}_{5,3,3}{y}_{3}\\
&&&&0&{o}_{5,4,3}{y}_{3}\\
&&&&&0\end{pmatrix}.\]
For more details we refer again to our procedure \href{https://www.math.uni-sb.de/ag/schreyer/images/data/computeralgebra/M2/doc/Macaulay2/NumericalGodeaux/html/_get__Relations__And__Normal__Form.html}{getRelationsAndNormalForm}.
Now using  the relations coming from ${\color{red} a} {\color{blue} o} -{\color{red} b_1(y)}{\color{blue}n^{tr}} - {\color{blue} c}{\color{red} b_3(y)^{tr}}=0$, we can express any $n$-variable as a linear combination of $a, o$ and $c$-variables. The final system of linear equations depend then on 12 $o$-variables and 20 $c$-variables, and a solution for these variables can be computed using syzygies. We will study these linear solution spaces more extensively in the following sections.

\begin{remark}
	A different normalization pursued in \cite{Stenger18} is to make the $c$-matrix as much zero as possible. In particular, for torsion-free numerical Godeaux surfaces with no hyperelliptic bicanonical fibers, the $c$-matrix can be assumed to be zero in this setting.
\end{remark}

\section{The Fano variety of lines $F_1(Q)$}\label{Fano}
In this section, we will explain how to construct lines contained in  the complete intersection variety $Q$ and analyze the geometry the Fano variety of lines $F_1(Q)$. Moreover, we explain how the action of  stabilizer group
of the coordinate points  of $\PP^3$ extends to an action on $Q$ respectively $F_1(Q)$. 

Let us start by presenting a method for computing (random) lines in $Q$. Recall that $Q = V(q_0,\ldots,q_3)$ where each $q_i$ is the Pfaffian of a skew-symmetric $4 \times 4$ matrix $M_i$. Let $b_i \in \Bbbk^4$. Then, as $M_i$ is skew-symmetric, $M_ib_i$ gives in general three linear equations in the $a$-variables whose ideal contains the Pfaffian $q_i$. Thus, choosing three out of the four skew-symmetric matrices, say $M_0,M_1$ and $M_2$ and non-zero vectors $b_0,b_1$ and $b_2$ in $\Bbbk^4$, we obtain in general a codimension 9  linear space in $\PP^{11}$, thus a plane $\Lambda \subset \PP^{11}$. Then 
$$ Q \cap \Lambda = V(q_3) \cap \Lambda = \tilde{Q_3} \subset \Lambda \cong \PP^2$$
gives a conic  $\tilde{Q_3}$ in $\PP^2$. Now, over a finite field $\Bbbk = \ZZ/p$, the conic $\tilde{Q_3}$ decomposes into a union of two lines with a probability $1/p$.  Consequently, choosing repetitively vectors $b_0,b_1,b_2$, we end up with a line in $\tilde{Q_3} \subset \Lambda$, and thus, a line $\ell \subset Q$.   


Based on the presented construction of lines in $Q$, we  can see that $F_1(Q)$ is birational to a subscheme of four copies of $\PP^3$.  

Let $\ell \subset Q$ be a line given by a matrix $\hat{\ell} \in \St(2,12)$. 
For a general line in $Q$ the skew-symmetric matrix $m_i = M_i((x_0,x_1)\hat{\ell}))$ has rank 2 and its cokernel has expected free resolution
$$ 0 \leftarrow \coker m_i \leftarrow \Bbbk[x_0,x_1]^4 \leftarrow \Bbbk[x_0,x_1](-1)^4 \leftarrow \Bbbk[x_0,x_1](-1) \oplus \Bbbk[x_0,x_1](-2) \leftarrow 0.$$
Thus, for a general line $\ell \subset Q$, we obtain four constant syzygy vectors $b_i \in \Bbbk^4$. 
\begin{proposition}\label{4P3s}
	With the notation as above, the map 
	\begin{align*}
	F_1(Q) & \dashrightarrow \PP^3 \times \PP^3 \times  \PP^3 \times  \PP^3, \\
	\ell & \mapsto (b_0,b_1,b_2,b_3)
	\end{align*}
	is birational onto its image. Choosing just three out of the four ${\mathbb{P}^3}s$, the corresponding projection gives a generically $2:1$ rational map onto a hypersurface $H$ of type $(4,4,4)$ in $\mathbb{P}^3 \times \mathbb{P}^3 \times \mathbb{P}^3$. 
\end{proposition}

\begin{proof} Most of the statement is clear by the preceding remarks. To see that $F_1(Q)  \dashrightarrow \PP^3 \times \PP^3 \times  \PP^3 \times  \PP^3$ is birational onto its image, it suffices to find a line $\ell$ such that
	\begin{enumerate}
		\item all $M_{i}|_{\ell}$ have a unique constant syzygy $b_{i}$ up to scalars, and
		\item the equations $M_{i}b_{i}=0$ for $i=0,1,2,3$ redefine $\ell$.
	\end{enumerate}
	It is easy to find such example over a finite field. Note that this rational map is not a morphism in neither direction
	because both conditions can fail.
	
	It remains to compute the degree of the hypersurface $H$.
	Let $A=\Bbbk[a_{ij}]$ and $B=\Bbbk[b_{ij}]$ denote the coordinate ring of $\PP^{{11}}$ and the Cox ring of
	$\PP^{3}\times \PP^{3} \times \PP^{3}$. Write $M_{i}b_{i}= N_{i}a$, where $N_i$ is a $4\times 12$ matrix depending only on the $b_i$ and $a$ denotes the column vector of $a$-variables. Note that $b_{i}^{t}N_{i}=0$ because  $b_{i}^{t}M_{i}b_{i}=0$ by the skew-symmetry of $M_{i}$. Then
	$$N=\begin{pmatrix}N_{0}\cr N_{1} \cr N_{2}\end{pmatrix}$$
	is a $12 \times 12$ matrix whose kernel $\ker N(b) \cong \Bbbk^{3}$  for general $b=(b_{0},b_{1},b_{2}) \in \PP^{3}\times \PP^{3} \times \PP^{3}$ corresponds to the plane $\PP^{2}_{b} \subset \PP^{11}$ defined by the nine linear equations obtained from $M_{i}b_{i}=0$ for $i=0,1,2$. Consider the complex
	$$ 0 \leftarrow B(2)^{3} \leftarrow B(1)^{12}\oplus B^{12} \leftarrow B(-1)^{12}\oplus B^{12} \leftarrow B(-2)^{3}\leftarrow 0$$
	with differentials
	$$ \begin{pmatrix}b_{0}^{t}  & 0 & 0&0 \cr 0 &b_{1}^{t} & 0 &0 \cr  0& 0 &b_{2}^{t}  &0\cr \end{pmatrix},
	\begin{pmatrix} 0 & N\cr N^{t} & h \cr \end{pmatrix} \hbox{ and } 
	\begin{pmatrix} b_{0} & 0 & 0 \cr 
	0 &b_{1}& 0 \cr 0 & 0&b_{2}\cr
	0 & 0 & 0  \end{pmatrix}$$
	where $h$ is the Hessian of $q_{3}$. The determinant of this complex, equivalently the annihilator of the $H_{1}$-homology of this complex,
	describes the locus of points $b$ where the restriction of $q_{3}$ to $\PP^{2}_{b}$ has rank $<3$. This is a hypersurface of total degree $12=-3\cdot 2+12\cdot 1- 12\cdot(-1)+3\cdot(-2)$ in the Cox variables. In the fine grading the hypersurface has multidegree 
	$$(4,4,4)=-(2,2,2)+4(1,1,1)-4(-1,-1,-1)+(-2,-2,-2).$$
\end{proof}

\begin{corollary}
	The general fiber 
	\begin{align*}
	F_1(Q) & \dashrightarrow \PP^3 \times \PP^3  \\
	\ell & \mapsto (b_0,b_1)
	\end{align*}
	is an Abelian surface. The general fiber of the projection
	\begin{align*}
	H & \dashrightarrow \PP^3 \times \PP^3  \\
	\ell & \mapsto (b_0,b_1)
	\end{align*}
	is a $16$-nodal quartic in $\PP^{3}$, the Kummer surface of the Abelian surface above.
\end{corollary}

\begin{proof} $\PP^{2}_{b}$'s as  introduced in the previous proof  along which $q_{3}$ has rank $\le 1$ are expected to occur in codimension $3$.
	We can count their number with multiplicities either by enumerative methods using the complex above  or by an experiment over a finite field, since their number when finite, is locally constant in families. The experiment over a finite field allows in addition to establish that the general fiber $H_{b_0,b_1}$ of $H \dashrightarrow \PP^3 \times \PP^3$ has 16 distinct $A_{1}$ singularities. Thus, $H_{b_{0},b_{1}}$
	is a Kummer surface and the double cover is the corresponding abelian surface since it is
	unramified away from the 16 double points. 
	
	The first statement follows also from Reid's result: For general $(b_{0},b_{1}) \in \PP^{3}\times \PP^{3}$
	$M_{0}b_{0}=0,M_{1}b_{1}=0$ defines a $\PP^{5}_{b_{0},b_{1}} \subset \PP^{11}$ and $Q$ restricted to this $\PP^{5}$ is a complete intersection of two quadrics. The variety of lines contained in a smooth complete intersection of two quadrics in $\PP^{5}$
	is isomorphic to the Jacobian of a hyperelliptic curve of genus 2 (see \cite{ReidThesis}).
\end{proof}
\begin{remark}
	To construct Godeaux surfaces over number fields of degree 8 over $\QQ$, we can proceed as follows. 
	\begin{enumerate}
		\item Choose two rational points $b_0, b_1$ in the first two $\PP^3$s and a line in the third $\PP^3$. 
		\item The fiber $H_{b_0,b_1}$ intersects the line in four points. Thus, we obtain a point $(b_0,b_1,b_2)$ in $H$ defined over a degree four number field $K$. 
		\item For a general choice $(b_0,b_1,b_2)$ the last quadric $q_3$ restricted to the corresponding $\PP^2$ is a union of two lines defined over a degree 2 extension field of $K$. 
	\end{enumerate}
	A different construction method of lines in $Q$ defined over a  number field of degree 8 can be found \cite{Stenger18}, Section 7.2. 
\end{remark}

Recall from Section 3 that the subgroup $G \cong  (\Bbbk^{\ast})^{4}/\Bbbk^\ast \rtimes S_4 \subset \PGL(4,\Bbbk)$ fixes the four coordinate points of $\PP^3$ as a set. 
\begin{proposition}
	The induced action of   $(\lambda_0,\ldots,\lambda_3)  \in \Lambda = (\Bbbk^*)^4$ on the underlying vector space of our $\PP^{11}$ and is given by
$$a_{i,j} \mapsto \lambda_i\lambda_j^2a_{i,j}$$ 
and leaves $ Q \subset \PP^{11}$  invariant. 
The induced $S_4$-action on $\PP^{11}$ and $Q$ is a signed permutation action of the indices generated by 
\begin{align*}
&\left(a_{3,2},\,a_{3,1},\,a_{3,0},\,a_{2,3},\,a_{2,1},\,a_{2,0},\,a_{1,3},\,a_{1,2},\,a_{1,0},\,a_{0,3},\,a_{0,2},\,a_{0,1}\right)  \\
(0,1)\colon \qquad \longmapsto &
 \left(-a_{3,2},\,a_{3,0},\,a_{3,1},\,-a_{2,3},\,a_{2,0},\,a_{2,1},\,a_{0,3},\,a_{0,2},\,-a_{0,1},\,a_{1,3},\,a_{1,2},\,-a_{1,0}\right),\\
(1,2)\colon \qquad \longmapsto
& \left(a_{3,1},\,a_{3,2},\,-a_{3,0},\,a_{1,3},\,-a_{1,2},\,a_{1,0},\,a_{2,3},\,-a_{2,1},\,a_{2,0},\,-a_{0,3},\,a_{0,1},\,a_{0,2}\right),\\
(2,3)\colon \qquad \longmapsto
& \left(-a_{2,3},\,a_{2,1},\,a_{2,0},\,-a_{3,2},\,a_{3,1},\,a_{3,0},\,a_{1,2},\,a_{1,3},\,-a_{1,0},\,a_{0,2},\,a_{0,3},\,-a_{0,1}\right),
\end{align*}
corresponding to the transpositions $(0,1)$, $(1,2)$ and $(2,3)$. 
\end{proposition}
\begin{proof}
	By a direct inspection we see that $Q$ is invariant under this action. Note that the action requires signs because of the signs in the Pfaffians. This action is compatible with $S_4$-action on the $y$-variables generated by 
	\begin{align*}
	(0,1)\colon (y_0,y_1,y_2,y_3)&\mapsto (-y_1,-y_0,y_2,y_3), \\
	(1,2)\colon (y_0,y_1,y_2,y_3)&\mapsto (y_0,-y_2,-y_1,y_3), \\
	(2,3)\colon (y_0,y_1,y_2,y_3)&\mapsto (y_0,y_1,-y_3,-y_2).
	\end{align*}
	On the generators $z_0,\ldots,z_3$ the corresponding action is given by $z_i \mapsto \textup{sign}(\sigma)z_{\sigma(i)}$. With respect to these actions, the $a$-matrix is invariant. 
\end{proof}
Restricting to a marked numerical Godeaux surface this reduces to an action of the torus $\Lambda$ and an induced action of $\Lambda$ on the Fano variety of lines $F_1(Q)$. 
\begin{remark} The question whether the geometric quotient $F_1(Q)/\Lambda$ is unirational or not remains open. Embedding $F_1(Q)$ in $\P^{65}$ via the Pluecker embedding and considering a linearization of the given action of $\Lambda$ on $F_1(Q)$, we obtained that the geometric quotient has expected dimension 5 by exhibiting stable points in $F_1(Q)$ (see \cite{Stenger18}, Theorem 8.3.17).
	\medskip
	
	We tried to study $F_1(Q)$ via the rational map from Proposition \ref{4P3s}.
	We were able to compute generators of the multigraded ideal of the image $F'$ of $F_1(Q)$ in $(\PP^3)^4$ and the hypersurface $H$. Moreover, the $\Lambda$-operation on the $a$-variables lifts to an operation on the $b$-variables. We were able to define various rational maps $F' \dashrightarrow \PP^n$ given by multihomogeneous monomials with respect to the $\ZZ^4 \times \ZZ^4$-grading which collapse precisely the $\Lambda$-orbits.  Thus,  $F_1(Q)$ has negative Kodaira dimension.

Using Macaulay2, we computed a birational model of the 5-dimensional quotient space as an anticanonical divisor in a toric variety. However, we were not able to control the singularities of this model good enough to conclude that it is a Calabi-Yau fivefold. For details of this computation we refer to our Macaulay2-package \cite{SS20}. 
\end{remark}

\section{The dominant component}\label{subsec C-complex}
In Section \ref{normal form} we have seen that our construction method of numerical Godeaux surfaces consists mainly of two big steps: first, choosing a line in the quadratic complete intersection $Q \subset \P^{11}$ and second, choosing a solution for a linear system of equations.
This section concerns the second step. We summarize some of the main results of this section.

\begin{theorem}\label{dominant component}
	For a general line $\ell \subset Q \subset \PP^{11}$ the linear system of equations for the  remaining $o$- and $c$-variables has a $4$-dimensional solution space, and $\ell \in F_{1}(Q)$ is a smooth point in the Fano variety of lines in $Q$.
\end{theorem}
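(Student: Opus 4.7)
The plan is to combine upper semi-continuity of rank with an explicit computation at a single line, of the sort automated in the authors' \Mac2 package. After the normalizations of Section \ref{normal form}, the choice of a line $\ell \subset \P^{11}$ determines the matrices $a$, $e$, $p$, and the residual equations in the $o$-, $c$- and $n$-variables are linear. Using the first block of relations to eliminate the $n$-variables, we are left with a linear map
\[
\Phi_{\ell}\colon \Bbbk^{12}\oplus \Bbbk^{20}\longrightarrow \Bbbk^{N}
\]
on the remaining $o$- and $c$-variables, whose matrix depends polynomially on a Stiefel representative $\hat\ell \in \St(2,12)$ of $\ell$. In particular $\dim\ker\Phi_{\ell}$ is upper semi-continuous on $F_{1}(Q)$.

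First step: exhibit a single line $\ell_{0}\subset Q$ with $\dim\ker\Phi_{\ell_{0}}=4$. I would construct $\ell_{0}$ by a random search on $Q$ over a small finite field (and, if desired, lift to $\QQ$), and compute $\rank\Phi_{\ell_{0}}$ exactly. By semi-continuity this forces $\dim\ker\Phi_{\ell}\le 4$ on a Zariski open neighbourhood of $\ell_{0}$ in $F_{1}(Q)$. For the reverse inequality, I would produce an explicit $4$-parameter family of solutions that depends algebraically on the Stiefel coordinates of $\ell$ in a neighbourhood of $\ell_{0}$, by taking a basis of $\ker\Phi_{\ell_{0}}$ and lifting each generator via the syzygy computation applied to the parametric matrix of $\Phi_{\ell}$.

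Second step: show that $F_{1}(Q)$ is smooth of dimension $8$ at $\ell_{0}$, from which smoothness at a general line follows by openness. The tangent space to $F_{1}(Q)$ at $\ell$ is $H^{0}(\ell,N_{\ell/Q})$ and the obstruction space is $H^{1}(\ell,N_{\ell/Q})$. Assuming $Q$ is smooth along $\ell_{0}$, which is itself a rank condition on the Jacobian of $(q_{0},\ldots,q_{3})$ that can be checked directly, the conormal sequence gives
\[
0\longrightarrow N_{\ell_{0}/Q}\longrightarrow \sO_{\ell_{0}}(1)^{\oplus 10}\xrightarrow{\ \psi\ } \sO_{\ell_{0}}(2)^{\oplus 4}\longrightarrow 0,
\]
and the associated long exact sequence identifies $h^{1}(N_{\ell_{0}/Q})$ with the cokernel of $H^{0}(\psi)\colon \Bbbk^{20}\to \Bbbk^{12}$. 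A second explicit rank computation then gives $h^{1}=0$ and $h^{0}=8$, proving smoothness at $\ell_{0}$ of the expected dimension $8$.

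The main obstacle is not the local rank computations, which are routine, but making sure that $\ell_{0}$ lies on the irreducible component of $F_{1}(Q)$ corresponding to the dominant $8$-dimensional family of Godeaux surfaces, rather than on a ghost component on which $\dim\ker\Phi_{\ell}$ could jump. This is settled a posteriori: once the locally complete $8$-dimensional family is constructed in the subsequent sections, a general member of it projects to a general point of the $8$-dimensional component of $F_{1}(Q)$ through $\ell_{0}$, on which both conclusions of the theorem have been verified.
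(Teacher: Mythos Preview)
Your treatment of smoothness is essentially the paper's: compute the normal bundle $\mathcal{N}_{\ell/Q}$ at one explicit line and check $h^1=0$. The paper does this at a Barlow line in Section~\ref{subsec_barlow}, finding $\mathcal{N}_{\ell/Q}\cong\sO_{\PP^1}(1)^2\oplus\sO_{\PP^1}^4$.

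For the $4$-dimensionality, however, your route diverges substantially and has a gap. The upper bound via semi-continuity is fine, but your lower bound is not: having four independent solutions at $\ell_0$ does \emph{not} let you ``lift'' them to nearby lines, since semi-continuity goes the wrong way. What you would actually need is a syzygy computation of $m_a$ over the function field of $F_1(Q)$ (or a Stiefel chart), exhibiting four generic syzygies; this is a different and harder computation than evaluating at a point, and you have not carried it out. The paper avoids this entirely by working over $S_Q=S_a/I(Q)$ rather than over the parameter space of lines: it shows that $E=\ker(m_a\otimes S_Q)$ sits in a complex
\[
0\to 2S_Q(-1)\to E\to 4S_Q\to 0
\]
obtained by combining two auxiliary complexes $C_1$ and $C_2$ via a snake-lemma argument, and then verifies computationally that the homology loci of $C_1,C_2$ have codimension $\ge 2$ in $Q$. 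Restricting to any line $\ell$ missing these loci yields an exact sequence $0\to\sO_{\PP^1}(-1)^2\to E|_\ell\to\sO_{\PP^1}^4\to 0$, whence $h^0(E|_\ell)=4$ exactly.

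This structural approach buys something your semi-continuity argument cannot: it identifies \emph{precisely which} lines fail to have a $4$-dimensional solution space, namely those meeting the explicit homology loci. That information is indispensable for the rest of the paper (locating torsion surfaces, hyperelliptic fibers, ghost components). It also dissolves your worry about landing on the wrong component of $F_1(Q)$: the conclusion holds on the Zariski-open set of lines avoiding a codimension-$\ge 2$ locus in $Q$, with no appeal to the global geometry of $F_1(Q)$ or to the Godeaux families built later.
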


\begin{corollary} There exists a $8+3$-dimensional irreducible family in the unfolding parameter space for Godeaux surfaces, which modulo the $(\CC^{*})^{3}$-action gives a $8$-dimensional locally complete family of Godeaux surfaces with trivial fundamental group hence torsion group $\Tors=0$.
\end{corollary}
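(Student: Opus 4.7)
The plan is to combine Theorem \ref{dominant component} with a dimension count and a brief deformation-theoretic argument, bootstrapping from a single Macaulay2-verified seed example.

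First, parameter count and irreducibility. The Fano scheme $F_1(Q)$ of lines in the complete intersection $Q=V(q_0,\ldots,q_3)\subset\PP^{11}$ of four quadrics has expected dimension $2(11-1)-4(2+1)=8$, and by Theorem \ref{dominant component} a generic $\ell\in F_1(Q)$ is a smooth point; hence an irreducible component of dimension $8$ exists, inheriting irreducibility from $Q$ itself (whose singular locus has codimension $\geq 8$). For each such $\ell$ the $o$- and $c$-variables lie in a $4$-dimensional linear solution space, and rescaling a solution by a nonzero scalar produces isomorphic $S$-modules $R$. This gives an irreducible parameter space $\mathcal{U}$ of triples $(\ell,[v])$ of dimension $8+3=11$.

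Second, one must show that a general point of $\mathcal{U}$ really produces the canonical ring of a numerical Godeaux surface. By upper semicontinuity it suffices to exhibit one explicit member of $\mathcal{U}$ (computed in Macaulay2 over a small finite field and then lifted to $\QQ$) at which: (i) the ring condition (R.C.) of Remark \ref{rem_rringcond} is satisfied, so that $R$ acquires a Gorenstein ring structure; (ii) $\Proj R\subset \PP(2^2,3^4,4^4,5^3)$ has only Du Val singularities, so that it is the canonical model $\canmod$ of a numerical Godeaux surface $X$; and (iii) $|2K_X|$ has no fixed part and four distinct base points, so that Lemma \ref{lem_oddtorsion} applies.

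Third, to identify the torsion group I would use Lemma \ref{lem_oddtorsion} together with Theorem \ref{cor_tricanbase}: torsion is detected by the number $b$ of base points of $|3K_X|$, which can be read off from the resolution on the seed example. One checks $b=0$, hence $\Tors X=0$; upper semicontinuity of $b$ then propagates trivial torsion to an open subset of $\mathcal{U}$. Combined with the fact that both the topological and algebraic fundamental groups of a numerical Godeaux surface coincide with its torsion subgroup (as recalled in Remark \ref{rem_h0tors} and the preliminaries), this yields $\pi_1(X)=1$ on the same open locus.

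Finally, the stabilizer of the four coordinate points in $\Aut(\PP^3)=\PGL(4,\Bbbk)$ is $G=(\CC^*)^3\rtimes S_4$; the torus $(\CC^*)^3$ acts with finite stabilizers on $\mathcal{U}$, so the quotient is an $8$-dimensional irreducible family of unmarked numerical Godeaux surfaces with trivial fundamental group. Since the moduli space of numerical Godeaux surfaces is itself $8$-dimensional, local completeness reduces to showing that the Kodaira--Spencer map of our family is injective at the seed example, again a finite linear-algebra check on the deformation complex built from $d_1,d_2$. The main obstacle lies entirely at the seed: certifying (i)--(iii), the equality $b=0$, and Kodaira--Spencer injectivity by an honest Macaulay2 computation, and then spelling out open-ness of each of these conditions in $\mathcal{U}$ so that they persist on a dense open subset of the constructed family.
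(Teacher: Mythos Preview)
Your dimension count and the outline for producing an $11$-dimensional parameter space are essentially what the paper does, but there is a genuine gap in your third step. You assert that ``both the topological and algebraic fundamental groups of a numerical Godeaux surface coincide with its torsion subgroup,'' citing Remark \ref{rem_h0tors} and the preliminaries. This is false, or at least unproven: what the preliminaries state is $H_1(X,\ZZ)=\Tors X$, i.e.\ the \emph{abelianization} of $\pi_1(X)$ equals the torsion group. Establishing $\Tors X=0$ via $b=0$ only gives $H_1(X,\ZZ)=0$; it does not rule out a nontrivial perfect $\pi_1$. So your argument, as written, does not reach $\pi_1(X)=1$.

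The paper closes this gap by a completely different route: it shows (Section \ref{subsec_barlow}) that the Barlow surface---already known to be simply connected by Barlow's original topological argument---lies in the dominant family. Concretely, one computes a standard resolution of the canonical ring of a Barlow surface, reads off the corresponding line $\ell\subset Q$, verifies that the solution space over $\ell$ is still $4$-dimensional and that ${\cal N}_{\ell|Q}\cong\sO_{\PP^1}(1)^2\oplus\sO_{\PP^1}^4$, so $\ell$ is a smooth point of $F_1(Q)$ and can be moved to a general line. Simple connectedness is then transported to the whole family by deformation invariance of $\pi_1$, not deduced from $\Tors=0$. A secondary point: for local completeness you need the Kodaira--Spencer map to be \emph{surjective} onto $H^1(X,T_X)$, not merely injective; this again is handled in the paper by the normal-bundle computation at the Barlow line together with the known expected dimension.
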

\begin{remark}
	Note that the Barlow surface is a member of this family by Proposition \ref{prop_barlow} below. Hence, all surfaces in the constructed family are simply connected. 
\end{remark}

We call this family the \emph{dominant family}, because it dominates the Fano variety of lines $F_{1}(Q)$
(\cite{Stenger18} established the irreducibility of $F_{1}(Q)$ numerically).

A sufficient condition for a line $\ell$ to lead only to Godeaux surfaces in the dominant family is that  $\ell$ does not intersect the homology loci of the complexes $C_{1}$ and $C_{2}$ introduced below.
A key point is that the Barlow surfaces are part of the dominant family.
Lines leading to torsion $\ZZ/3\ZZ$- and $\ZZ/5\ZZ$-Godeaux surfaces have to intersect some of the homology loci non-trivially. \medskip

Starting with matrices $d_1'$ and $d_2$ in normal form, the remaining unfolding parameters satisfy exactly 46 homogeneous relations  coming from $d_1'd_2 = 0$ :
the four quadratic relations $q_0,\ldots,q_3$ which are Pfaffians and 42 relations which are linear in the unknown $o$-variables and $c$-variables (see \href{https://www.math.uni-sb.de/ag/schreyer/images/data/computeralgebra/M2/doc/Macaulay2/NumericalGodeaux/html/_get__Relations__And__Normal__Form.html}{getRelationsAndNormalForm}). 

We start with representing these 42 relations by a matrix. By $\underline{c}$ we denote the column vector of the 20 unknown $c$-variables and by $\underline{o}$ the column vector of the remaining 12 $o$-variables.  
The linear system of equations is of the form 
\begin{equation*}
\left(\begin{array}{c| c}
0  & l_1 \\ \hline l_2 & q
\end{array}\right) 
\left(\begin{array}{c}
$\underline{c}$ \\  $\underline{o}$
\end{array}\right) = 0
\end{equation*}
where $l_1$ is a $12 \times 12$-matrix and $l_2$ is a $30 \times 20$-matrix, both having entries linear in the $a$-variables, and $q$ is a 
$30 \times 12$-matrix with quadratic entries. We denote  the $42\times 32$-matrix by $m_a$ and 
the standardly graded polynomial ring with the 12 remaining $a$-variables by $S_a$.

First let us describe the quadratic matrix $l_1$. Arranging the $o$-variables and the corresponding 12 relations of degree 4 properly, $l_1$ is a skew-symmetric matrix which is the direct sum of the  matrices
\begin{equation*}
\begin{pmatrix}
0&a_{3,1}&a_{3,0}\\
&0&a_{3,2}\\
&&0\\
\end{pmatrix}, 
\begin{pmatrix}
0&a_{2,1}&a_{2,0}\\
&0&a_{2,3}\\
&&0\\
\end{pmatrix},
\begin{pmatrix}
0&a_{1,3}&a_{1,0}\\
&0&a_{1,2}\\
&&0\\
\end{pmatrix},
\begin{pmatrix}
0&a_{0,2}&a_{0,1}\\
&0&a_{0,3}\\
&&0\\
\end{pmatrix}.
\end{equation*}
Note that the entries of such a $3\times 3$-matrix are exactly the entries of a row of the $a$-matrix. 
Resolving $l_1$ and $l_1^t = -l_1$ yields a complex $C_{1}$ which is a direct sum of four Koszul complexes 
\[
\begin{matrix}
&0&1&2&3\\\text{total:}&4&12&12&4\\\text{-3:}&4&12&12&4\\
\end{matrix}.
\]

The $30\times 20$-matrix $l_2$ is not as easy to describe. Arranging the $c$-variables and the 30 relations of degree 6, we get 
\begin{equation*}
l_2 = \left(\begin{array}{c| c}
l_1  & 0 \\ \hline n_1 & n_2
\end{array}\right),
\end{equation*}
where $n_1$ is $18\times 12$-matrix and $n_2$ a $18\times 8$-matrix, both having full rank. Furthermore, the matrix $l_2$ has full rank 20 and hence no non-trivial syzygies. However, over the quotient ring $S_Q = S_a/I(Q)$ we obtain syzygies. Moreover, 
rather unexpectedly, both modules $\ker(l_2\otimes S_Q) \cong 2S_{Q}(-1)$ and $\ker(l_2^t\otimes S_Q)\cong 12S_{Q}(-4)$ are free  over this quotient ring. Putting these together, we get a generically exact complex $C_{2}$ with Betti numbers 
\[ \begin{matrix}
&\text{0}&\text{1}&\text{2}&3\\
\text{total:}&12&30&20&2\\
\text{-4:}&12&30&20&\text{.}\\
\text{-3:}&\text{.}&\text{.}&\text{.}&\text{.}\\
\text{-2:}&\text{.}&\text{.}&\text{.}&2\\
\end{matrix}.
\]
Similarly, $\ker(m_{a}^{t}\otimes S_{Q}) \cong 4 S_{Q}(-3)\oplus 12 S_{Q}(-4)$ is free, however $E=\ker(m_{a}\otimes S_{Q})$ is not free. \medskip

\noindent
{\it Proof} of Theorem \ref{dominant component}.
Since any syzygy of $l_2$ induces a syzygy of $m_a$ and every syzygy of $m_a$ projects to one of $l_1$, we can combine these  complexes into a commutative diagram
\begin{center}
	\begin{tikzpicture}\label{C-complex}
	\matrix(m)[matrix of math nodes,
	row sep=2.1em, column sep=2.9em,
	text height=1.5ex, text depth=0.25ex]
	{ & 0 & 0 & 0& \\
		0 &2S_Q(-1) & E & 4S_Q& \\
		0&20S_Q(2) & 20S_Q(2) \oplus 12S_Q(1) & 12S_Q(1) & 0\\
		0&30S_Q(3) & 12S_Q(2) \oplus 30S_Q(3) & 12S_Q(2) &0\\
		0&12S_Q(4) & 4S_Q(3) \oplus 12S_Q(4) & 4S_Q(3)&0\\
		&0 & 0 & 0 &\\};
	\path[->]
	(m-2-1) edge (m-2-2)
	(m-3-1) edge (m-3-2)
	(m-4-1) edge (m-4-2)
	(m-5-1) edge (m-5-2)
	(m-3-4) edge (m-3-5)
	(m-4-4) edge (m-4-5)
	(m-5-4) edge (m-5-5)
	(m-1-2) edge (m-2-2)
	(m-2-2) edge (m-3-2)
	edge (m-2-3)
	(m-3-2) edge node[right] {$l_2$} (m-4-2)
	edge (m-3-3)
	(m-4-2) edge[red] node[right] {$g_2$} (m-5-2)
	edge[red] (m-4-3)
	(m-5-2) edge (m-6-2)
	edge (m-5-3)
	(m-1-3) edge (m-2-3)
	(m-2-3) edge (m-3-3)
	edge (m-2-4)
	(m-3-3) edge[red]  node[right]{$m_a$} (m-4-3)
	(m-4-3) edge (m-5-3)
	edge (m-4-4)
	(m-5-3) edge (m-6-3)
	edge (m-5-4)
	(m-1-4) edge (m-2-4)
	(m-2-4) edge[red](m-3-4)
	(m-3-4) edge node[right] {$l_1$} (m-4-4)
	(m-3-3)  edge[red]  (m-3-4)
	(m-4-4) edge node[right] {$g_1$}  (m-5-4)
	(m-5-4) edge (m-6-4);
	
	\end{tikzpicture} 
\end{center}
with three split exact rows.
The columns are only generically exact. Following the red arrows in the diagram, we obtain a boundary map 
$4S_Q \rightarrow 12 S_Q(4)$.  
We verify computationally that this boundary map is the zero homomorphism, and hence we obtain a complex
\begin{equation}\label{eq_solspace}
0 \rightarrow 2S_Q(-1) \rightarrow E \rightarrow 4S_Q \rightarrow 0
\end{equation}
which is exact except at the last position, due to the homology $H_{1}(C_{2})=\ker(g_{2})/ \im(l_{2})$.
Below we will prove computationally that the support of the homology groups $H_{i}(C_{1})$ and $H_{i}(C_{2})$
have codimension $\ge 2$ in $Q$. Thus a general line $\ell\subset Q$ does not intersect this locus. If $\ell$ is such a line,
then the complex in \eqref{eq_solspace} restricted to $\ell$ is exact and we get an exact sequence of global sections on $\PP^1 \cong \ell$ 
\begin{equation}
0 \rightarrow H^0(\sO_{\PP^1}(-1)^2) \rightarrow H^0(E|_{\ell}) \rightarrow H^0(\sO_{\PP^1}^4) \rightarrow 0. 
\end{equation}
Consequently, we obtain a 4-dimensional solution space for the remaining  $o$- and $c$-variables in this case. 

We will prove that $F_{1}(Q)$ is smooth at a general $\ell$ computationally by studying a Barlow line in Section \ref{subsec_barlow}. Thus with these two further assertions we obtain the proof of Theorem \ref{dominant component}. 
\qed \medskip

In the following we briefly state our results on the homology loci of the complexes introduced above using \Mac2. For a more detailed study we refer to our forthcoming paper studying special bicanonical fibers. 
The complex $C_1$ has just homology at the zeroth position and we compute that
$H_0(C_1) = \coker g_1$ is supported at a 4-dimensional scheme  decomposing in $Q$ into eight irreducible varieties. 

For the second complex $C_2$ we have $H_2(C_2) = H_3(C_2) = 0$ by construction. The module $H_1(C_2) = \ker g_2 / \im l_2$ is supported at a  5-dimensional scheme of degree $72$ in $Q$ consisting of 24 irreducible components all of codimension 6 in $\PP^{11}$.


The module $H_0(C_2) = \coker g_2$ is as well supported at a 5-dimensional scheme of degree $72$ in $Q$ whose minimal primes are exactly the minimal primes of the ideal of  $4\times 4$-minors of the $a$-matrix in $Q$. 
In particular we deduce from these computations that the homology loci of $C_{1}$ and $C_{2}$ have codimension $\ge 2$ in $Q$. 

\subsection{Barlow surfaces}\label{subsec_barlow}
The Barlow surface (\cite{Barlow}) was the first example of a simply connected numerical Godeaux surface. In this subsection we first sketch the original construction due to Barlow, reconstruct the surface then with our construction and show in the end that the Barlow surfaces are deformation equivalent to the members of our dominant family. In particular, every surface in our dominant family is simply connected. 

For the reconstruction of the original Barlow surface we follow the descriptions in \cite{Barlow} and \cite{Lee}.
To start with, we recall that there is an 8-dimensional family of numerical Godeaux surfaces with $\Tors = \ZZ/5\ZZ$ which are given as the quotient of quintics in $\P^3$ under a free action of $\ZZ/5\ZZ$. In \cite{CataneseBabbage}, Catanese showed that there is a 4-dimensional subfamily in which the corresponding quintic is the determinant of a $5\times 5$ symmetric matrix. 
Moreover, in this 4-dimensional family there exists a 2-dimensional subfamily in which the group action of $\ZZ/5\ZZ$ can be extended to a group action of the dihedral group $D_5$. Using a twist of this action, Barlow realized a simply connected numerical Godeaux surface as a quotient of a double cover of such a quintic. This construction shows the existence of a 2-dimensional family of simply connected numerical Godeaux surfaces. 
\par  \smallskip
In the following we briefly recall the description of a symmetric determinantal quintic $Q_5 \subset \P^{3}$ and the definition of the action of $D_5$ on $Q_5$. 
Let $u_1,\ldots,u_4$ denote the coordinates of $\P^3$, and let $\xi$ be a primitive fifth root of unity. Then the group $D_5 = \langle \beta,\tau\rangle$ acts on $\P^3$ via
\begin{equation*}
\begin{aligned}
\beta \colon (u_1:u_2:u_3:u_4) &\mapsto (\xi u_1,\xi^2 u_2,\xi^3 u_3,\xi^4u_4), \\
\tau \colon (u_1:u_2:u_3:u_4) &\mapsto (u_4:u_3:u_2:u_1).
\end{aligned}
\end{equation*}
A quintic in $\P^3$ which is invariant under this action is the determinant of the symmetric matrix
\begin{equation}\label{eq_symmatrix}
A  = (a_{ij})= \begin{pmatrix}
0 & a_1u_1 & a_2u_2 & a_2u_3 & a_1u_4 \\
a_1u_1 & a_3u_2 & a_4u_3 & a_5u_4 & 0 \\
a_2u_2 & a_4u_3 & a_6u_4 & 0 & a_5u_1 \\
a_2u_3 & a_5u_4 & 0 & a_6u_1 & a_4u_2 \\
a_1u_4 & 0 & a_5u_1 & a_4u_2 & a_3u_3  
\end{pmatrix},
\end{equation}
where $a_1,\ldots,a_6 \in \Bbbk$ are parameters (see \cite{Lee}). 
A generic surface $Q_5 = \det A$ has an even set of  20 nodes given by the $4 \times 4$ minors of $A$. Hence, there exists a double cover
$\Phi \colon  F \rightarrow Q_5$ branched over these nodes.  Then $\Phi$ is the canonical map of $F$, and the canonical ring $R = R(F)$ is generated by $u_1,\ldots,u_4 \in R_1^{+}$, $v_1,\ldots,v_5 \in  R_2^{-}$ with the following relations 
\begin{equation}
\begin{aligned}
& \sum_{j}a_{ij}v_j, \textup{  (5 relations of degree 3)} \\
& v_jv_k - B_{jk}, \textup{  (15 relations of degree 4)}
\end{aligned}
\end{equation}
where $B_{jk}$ is the entry in row $j$ and column $k$ of the adjoint matrix of $A$ (see  \cite{CataneseBabbage}, Theorem 3.5). 
\begin{theorem}[\cite{Barlow}, Theorem 2.5 and subsequent Corollary]
	There exists an action of $D_5 = \langle \beta, 
	\alpha \rangle$ on $F$ such that $\beta$ acts freely on $F$ and $\alpha$ with a finite fixed locus. The  corresponding quotient is a surface  $B$ with four double points whose resolution is a minimal surface of general type with $K^2 = 1$, $p_g=0$ and $\pi_1 = \{1\}$.
\end{theorem}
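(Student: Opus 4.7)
The plan is to lift the projective $D_5$-action to the canonical double cover $F$, analyse the fixed loci, and read off the invariants of the quotient. First I would construct the action on $F$ through its canonical ring $R(F)$. Since $R(F)$ is generated by $u_1,\ldots,u_4 \in R_1^{+}$, on which the projective $D_5$-action is already specified, and by $v_1,\ldots,v_5 \in R_2^{-}$, the action on the $v_j$ is essentially forced by the requirement that the linear relations $\sum_j a_{ij}v_j = 0$ and the quadratic relations $v_j v_k = B_{jk}$ be preserved. A direct computation lifts $\beta$ by rescalings $v_j \mapsto \xi^{\sigma(j)} v_j$ with explicit exponents $\sigma(j)$, while $\tau$ admits two lifts differing by the sign involution on the $v_j$-space. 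I would take $\alpha$ to be the lift twisted by $-1$ on the $v$-part, so that $\langle \alpha, \beta\rangle \cong D_5 \subset \Aut(F)$.

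Next I would analyse the fixed loci. The $\beta$-fixed points on $\PP^3$ are the four coordinate points, and for generic parameters $a_1,\ldots,a_6$ the matrix $A$ evaluated at any coordinate point has rank $5$, so none of them lies on $Q_5$; hence $\beta$ acts freely on $Q_5$. Since the $20$ branch nodes form a $\beta$-invariant set disjoint from the coordinate points and the cover $F\to Q_5$ is \'etale over $Q_5^{\mathrm{sm}}$, the $\beta$-action lifts to a free action on $F$. For $\alpha$, the ordinary fixed locus of $\tau$ on $\PP^3$ meets $Q_5$ in a curve, but the $-1$-twist in the $v_j$-direction means that $\alpha$ interchanges the two sheets of the double cover over this curve; consequently the $\alpha$-fixed locus on $F$ is zero-dimensional, and an explicit computation with the equations of $R(F)$ produces exactly four $\alpha$-fixed points, each with stabilizer $\langle\alpha\rangle \cong \ZZ/2\ZZ$ acting as $(-1,-1)$ on the tangent space. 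Under the quotient $B = F/D_5$ the five $\beta$-translates of each such point collapse to a single ordinary double point ($A_1$-singularity), so $B$ has exactly four nodes and is smooth elsewhere.

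Finally I would compute the invariants. Because the branch locus consists of isolated points, $K_F = \pi^{\ast} K_{Q_5}$ and hence $K_F^2 = 2\cdot 5 = 10$; dividing by $|D_5| = 10$ gives $K_B^2 = 1$, and since the four singularities are rational double points of type $A_1$, the minimal resolution $\widetilde B \to B$ preserves $K^2$ and $p_g$. Regarding $H^0(F,K_F) = \langle u_1,\ldots,u_4\rangle$ as a $D_5$-representation, $\beta$ scales each $u_i$ by a non-trivial character of $\ZZ/5\ZZ$, hence the space of invariants vanishes and $p_g(\widetilde B) = 0$; combined with $K^2 = 1$ this identifies $\widetilde B$ as a minimal surface of general type with the numerical invariants of a Godeaux surface (minimality follows from $K_{\widetilde B}$ being the pullback of an ample class up to the exceptional $(-2)$-curves). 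The hardest step will be showing $\pi_1(\widetilde B) = \{1\}$. For this I would invoke Armstrong's theorem: if $\pi_1(F) = 1$, then $\pi_1(F/D_5) = D_5/N$, where $N$ is the normal subgroup generated by elements of $D_5$ with fixed points. Since $\alpha$ has fixed points, $N$ contains a reflection, and a single reflection together with $\beta$ generates all of $D_5$, so $N = D_5$ and $\pi_1(B) = 1$. The essential input is therefore $\pi_1(F) = 1$, which follows from the simple-connectivity of the nodal quintic $Q_5$ together with the fact that its $20$-node branch set is an \emph{even} set, so that $F$ is a smooth simply connected double cover in the sense of Catanese. Since resolving $A_1$-singularities does not alter the fundamental group, one concludes $\pi_1(\widetilde B) = \pi_1(B) = \{1\}$.
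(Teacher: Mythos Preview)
The paper does not prove this theorem; it is quoted from Barlow's original article \cite{Barlow} and used as a black box, with only the explicit formulas for the $D_5$-action on $R(F)$ recorded afterwards. So there is no ``paper's own proof'' to compare your proposal against.

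That said, your outline is essentially a reconstruction of Barlow's argument and is broadly correct in strategy: lift the action to the canonical ring, check freeness of $\beta$ via the coordinate points lying off $Q_5$, identify $\alpha$ as the $\iota$-twisted lift of $\tau$, and deduce the numerical invariants from $K_F^2=10$ and the absence of $D_5$-invariants in $H^0(F,K_F)$. Two places deserve more care. First, the assertion that $\alpha$ has \emph{exactly four} fixed points is not justified: you correctly locate them over the $\tau$-fixed points of $Q_5$, but whether the twisted lift fixes or swaps the two sheets over a given $\tau$-fixed point depends on the sign with which $\tilde\tau$ acts on the fibre, and this requires an honest computation with the relations $\sum_j a_{ij}v_j=0$ and $v_jv_k=B_{jk}$ rather than a one-line claim. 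Second, and more seriously, the step $\pi_1(F)=1$ is the heart of Barlow's paper and is not a formal consequence of ``$Q_5$ simply connected plus even set of nodes'': a double cover of a simply connected variety branched in codimension~$2$ need not be simply connected. Barlow's argument goes through an explicit degeneration/topological analysis; your appeal to Catanese is not enough as stated. The Armstrong-type argument for $\pi_1(B)$ from $\pi_1(F)=1$ is fine once that input is secured.
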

The element $\alpha$ acts on $F$ via an induced action of $\tau$ on $F$ twisted by the canonical involution $\iota\colon F \rightarrow F$.  
The $D_5=\langle \beta,\alpha\rangle$-action on $F$ is given by 
\begin{alignat*}{3}
& \beta(u_i) = \xi ^i u_i,   \qquad &&\beta(v_i) = \xi^{-i} v_i, \\
& \alpha(u_i) = u_{-i} , \qquad  && \alpha(v_i) = -v_{-i} . 
\end{alignat*}
using indices in $\ZZ/5\ZZ$, see
\cite{BoehningBothmeretAl}, Remark 4.2.
\begin{example}
	The special surface constructed in \cite{Barlow} corresponds to a symmetric quintic as in \eqref{equ_closedemb} with parameters 
	\[ a_1 = a_2 = a_4 = a_5 = 1, \ a_3 = a_6 = -4,\] 
	see	\cite{BoehningBothmeretAl}, Remark 2.1.
	We take these parameters and construct with the help of \Mac2 the canonical ring $R(X) = R(F)^{D_5}$, the canonical model $\canmod$, a standard resolution of $R(X)$ as an $S$-module and the corresponding line $\ell$ in $Q$.
\end{example}

\begin{proposition}\label{prop_barlow}
	The Barlow surface is deformation equivalent to a general member of our 8-dimensional  family of torsion-free numerical Godeaux surfaces. In particular, all members of our dominant family are simply connected. 
\end{proposition}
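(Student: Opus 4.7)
The plan is to exhibit both the Barlow surface and a generic member of the dominant family as fibers of a single flat family over an irreducible base, then to invoke deformation invariance of the fundamental group of a smooth projective surface.

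First I would verify that the line $\ell_{B}\subset Q$ obtained in the previous subsection from the Barlow data satisfies the genericity hypotheses that characterize the dominant family. Concretely, one checks with \Mac2 that $\ell_{B}$ is disjoint from the homology loci of the complexes $C_{1}$ and $C_{2}$ described in Section \ref{subsec C-complex}, so that the linear system for the $o$- and $c$-variables has a four-dimensional solution space (as it does for a general line), and that $\ell_{B}$ is a smooth point of $F_{1}(Q)$. This places the Barlow unfolding data in the open stratum of the parameter space that governs the dominant family.

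Next I would assemble the universal family over this open stratum. The locus of lines in $F_{1}(Q)$ avoiding the bad loci is a non-empty Zariski open subset of $F_{1}(Q)$, which is irreducible by the numerical result of \cite{Stenger18}. Over this base the four-dimensional solution space forms a rank-four vector bundle; combining with the residual $(\CC^{*})^{3}$-action and taking the total space yields an irreducible parameter variety whose points parameterize standard resolutions of canonical rings. The ring condition (R.C.) of Remark \ref{rem_rringcond} and the Du Val condition on $\Proj R$ are Zariski open, and both are satisfied at the Barlow point; hence the Barlow point and a generic point lie in the same irreducible component of the parameter space, and can be joined by an irreducible curve along which both open conditions persist.

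Restricting the universal family to such a curve gives a flat family of canonical models $\canmod_{t}$ with only Du Val singularities, whose minimal resolutions $X_{t}$ form a smooth proper family of minimal surfaces of general type after an étale base change (simultaneous resolution of $A_{n}$-singularities is available analytically, and the base change is harmless for homotopy-type questions). By Ehresmann's theorem all fibers $X_{t}$ are diffeomorphic; in particular $\pi_{1}(X_{t})$ is constant along the curve. Since the Barlow fiber is simply connected by \cite{Barlow}, so is the generic fiber, proving both assertions at once.

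The main obstacle is the passage from the family of canonical models to a smooth family of minimal models, since simultaneous resolution of Du Val singularities is not Zariski-local; one must either pass to an analytic or étale cover that resolves the singularities in families, or argue directly at the level of canonical models using that $\pi_{1}(X)=\pi_{1}(\canmod)$ because the exceptional configurations are trees of $(-2)$-curves hence simply connected. Either route reduces the problem to deformation invariance of $\pi_{1}$ in a smooth proper family, which is classical.
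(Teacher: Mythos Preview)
Your opening verification step would fail. The Barlow line $\ell_{B}$ is \emph{not} disjoint from the homology loci of $C_{1}$ and $C_{2}$: Barlow surfaces have two hyperelliptic bicanonical fibers (see the remark at the end of Section~\ref{ghost}), and by Corollary~\ref{hyplocus} a hyperelliptic fiber over $q\in\ell$ forces $q\in V_{hyp}$, which is precisely one of the components of the support of $H_{0}(C_{2})$ described in Section~\ref{subsec C-complex}. Hence $\ell_{B}$ meets the support of $H_{0}(C_{2})$ in two points, and your ``open stratum of lines avoiding the bad loci'' does not contain the Barlow line. Since the rest of your argument (building a rank-four bundle over this open stratum and joining the Barlow point to a generic point inside it) rests on $\ell_{B}$ lying in that stratum, the argument collapses at the first step.

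The paper's proof handles this differently. It does \emph{not} claim that $\ell_{B}$ avoids the homology loci; instead it verifies directly that the solution space over $\ell_{B}$ is nonetheless $4$-dimensional (so the short exact sequence \eqref{eq_solspace} happens to restrict exactly over $\ell_{B}$ even though the complex $C_{2}$ has homology there). It then computes the normal bundle ${\cal N}_{\ell_{B}|Q}\cong \sO_{\PP^{1}}(1)^{2}\oplus\sO_{\PP^{1}}^{4}$, so $h^{1}({\cal N}_{\ell_{B}|Q})=0$ and $\ell_{B}$ is a smooth point of $F_{1}(Q)$ on an $8$-dimensional component. Since the bad loci have codimension $\ge 2$ in $Q$, one can then move $\ell_{B}$ within this component to a line that genuinely avoids them, placing the Barlow surface in the dominant family. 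Your proposal could be repaired along these lines: drop the disjointness claim, verify the $4$-dimensional solution space at $\ell_{B}$ directly, and use the normal bundle computation (rather than membership in an open stratum) to connect $\ell_{B}$ to a general line.
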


\begin{proof} We verify computationally that over the Barlow line $\ell$ the solution space in the second step  is a 4-dimensional linear space, whence, as in the case of a line in $Q$ intersecting no homology loci, we obtain a $\P^3$ of solutions.  
	Using our procedure \href{https://www.math.uni-sb.de/ag/schreyer/images/data/computeralgebra/M2/doc/Macaulay2/NumericalGodeaux/html/_normal__Bundle__Line__In__Q.html}{normalBundleLineInQ}, 
	we determine the normal sheaf ${\cal N}_{\ell|Q}$ which is a line bundle as $\ell$ does not meet the singular locus of $Q$ and obtain
	\[
	{\cal N}_{\ell|Q} \cong \sO_{\P^1}(1)^2 \oplus \sO_{\P^1}^4.
	\]
	Thus, $h^0({\cal N}_{\ell|Q} ) =8$, $h^1({\cal N}_{\ell|Q} ) =0$ and we can move the Barlow line to a line in $Q$ not meeting any of the homology loci. Thus the Barlow surface lies in the dominant component. \end{proof}

\section{Summary and outlook}
In this paper we found an 8-dimensional family of numerical Godeaux surfaces with trivial fundamental group and whose bicanonical system on the 
canonical model has 4 distinct base points. 
In a forthcoming paper, we identify lines in $Q$ leading to $\ZZ/3\ZZ$- and $\ZZ/5\ZZ$-Godeaux surfaces. These lines meet one or two of the special loci introduced in Section \ref{subsec C-complex}. 
In the forthcoming paper, we investigate experimentally surfaces over finite fields which arise by taking lines through a general point on one or two of these loci. 
Consequently, whether our approach will lead to a complete classification of (marked) numerical Godeaux surfaces,  depends on  whether we will be able to exclude the existence of special lines, which lead to smooth surfaces.
Numerical Godeaux surfaces where the  base locus of $|2 K_{\canmod}|$ is non-reduced
will be treated separately. Modifying the deformation set-up for a minimal free resolution of the canonical ring,  we obtain a  description of an 8-dimensional unirational locally complete family of Godeaux surfaces with $\Tors=\ZZ/2\ZZ$ or  $\Tors=\ZZ/4\ZZ$ using our method. 

Recently, Dias and Rito  proved that the $\ZZ/2\ZZ$-Godeaux surfaces form a single unirational 8-dimensional family (see \cite{DiasRito20}).

\bigskip

\vbox{\noindent Author Addresses:\par
\smallskip
\noindent{Frank-Olaf Schreyer}\par
\noindent{Mathematik und Informatik, Universit\"at des Saarlandes, Campus E2 4, 
D-66123 Saarbr\"ucken, Germany.}\par
\noindent{schreyer@math.uni-sb.de}\par
\smallskip

\noindent{Isabel Stenger}\par
\noindent{Mathematik und Informatik, Universit\"at des Saarlandes, Campus E2 4, 
D-66123 Saarbr\"ucken, Germany.}\par
\noindent{stenger@math.uni-sb.de}\par

}

\end{document}